\def\res{\hbox{ {\vrule height .3cm}{\leaders\hrule\hskip.3cm}}\hskip5.0\mu}
\newcommand\beqn{\begin{equation}}
\newcommand\eeqn{\end{equation}}
\newcommand\beqny{\begin{eqnarray}}
\newcommand\eeqny{\end{eqnarray}}
\newcommand\beqnyn{\begin{eqnarray*}}
\newcommand\eeqnyn{\end{eqnarray*}}
\newtheorem{theorem}{Theorem}[section]
\newtheorem{lemma}[theorem]{Lemma}
\newtheorem{proposition}[theorem]{Proposition}
\newtheorem{corollary}[theorem]{Corollary}
\def\a{\alpha}           
\def\g{\gamma}           
\def\e{\epsilon}         
\def\z{\zeta}            
\def\th{\theta}
\def\r{\rho} 
\def\s{\sigma}
\begin{document}
\setlength\parskip{5pt}
\title[maximum principle and unique continuation for singular minimal hypersurfaces]{A sharp strong maximum principle and a sharp unique continuation theorem for singular minimal hypersurfaces}
\author{Neshan Wickramasekera} 
\thanks{DPMMS, University of Cambridge, Cambridge, CB3 0WB, United Kingdom.}

\begin{abstract}
We prove the two theorems of the title, settling two long standing questions in the local theory of singular minimal hypersurfaces.  The sharpness of either result is with respect to its hypothesis on the size of the allowable singular sets. The proofs  of both theorems rely heavily on the author's recent regularity and compactness theory for stable minimal hypersurfaces, and on earlier work of Ilmanen, Simon and Solomon--White.
\end{abstract}

\maketitle
\newtheorem{hypotheses}{Hypotheses}
\renewcommand{\thehypotheses}{\arabic{section}.\arabic{hypotheses}}

\section{introduction and main results}\label{intro}
Let $N$ be an $(n+1)$-dimensional  smooth Riemannian manifold (without boundary, and not assumed to be complete). Consider two possibly singular minimal (i.e.\ area-stationary) hypersurfaces of $N$ with connected regular parts, a common point $x_{0}$ and with the property that locally near each of their common points, one hypersurface lies on one side of the other (the precise meaning  of which is that Hypothesis $K$ below is satisfied). If both hypersurfaces are free of singularities, it is a direct consequence of the Hopf maximum principle that  they must coincide. More generally, if we only assume that one hypersurface is free of singularities and if $x_{0}$ is a regular point of both, it is again straightforward to see that the hypersurfaces must coincide.\footnote{If we label the  hypersurfaces $M_{1}$, $M_{2}$ with $M_{2}$ free of singularities, then the Hopf maximum principle implies that 
the regular part of $M_{1}$, and hence $M_{1}$, is contained in $M_{2}.$ It is then an easy consequence of the first variation formula (see Lemma~\ref{UsefulLemma}) that $M_{1} = M_{2}.$}\\

Given the ubiquity of singular minimal hypersurfaces, it is a natural question to ask whether the same conclusion must hold if  the common point $x_{0}$ is  a \emph{singular} point. This question is  much more subtle and has been studied, in various special cases, by a number of authors. In view of a theorem of Solomon and White (\cite{SolWhi}, with an improvement due to White (\cite{W}, Theorem~4))---stated as Theorem~\ref{SolWhi} below---we know that in case one of the two hypersurface is free of singularities, we can always conclude (without assuming that $x_{0}$ is a regular point of both) that the hypersurfaces must coincide.  Earlier work of Miranda (\cite{M}) had established this for two oriented area minimizing boundaries one of which is free of singularities. Thus if the common point $x_{0}$ \emph{is} a singular point of one of the hypersurfaces, then it must necessarily be a singular point of \emph{both}.  Moschen in \cite{MM} and Simon in \cite{S}  independently proved that the answer to the above question is yes in case the hypersurfaces are oriented and area minimizing (with no restriction on either of their singular sets beyond 
what is imposed by the area minimizing property which implies that the singular sets must have Hausdorff dimension $\leq n-7$). Ilmanen in \cite{I} generalised this result to stationary hypersurfaces both of which are allowed to be singular but with the restriction (rather strong for stationary hypersurfaces) that their singular sets have locally finite $(n-2)$-dimensional Hausdorff measure. For other strong maximum principle type results in the presence of singularities,  see \cite{Sch} where certain singular hypersurfaces with non-zero mean curvature are considered, and \cite{W} where certain varieties of arbitrary codimension are considered.

Here we show  that the theorem of Solomon and White and that of Ilmanen (and hence all of the above results for minimal hypersurfaces) can be sharpened into a single strong maximum principle (Theorem~\ref{maxm} below) which says that the only a priori regularity hypothesis needed to conclude that two $n$-dimensional stationary hypersurfaces as above must coincide  is that the singular set of \emph{one} of them has $(n-1)$-dimensional Hausdorff measure zero. This condition is sharp in the sense that a larger singular set cannot be allowed (see Remark (1) below).

The precise notion of ``possibly singular  minimal submanifolds''  we use in Theorem~\ref{maxm} is that of stationary integral varifolds. Since our theorem  requires  only stationarity of the hypersurfaces and very little a priori regularity, varifolds are the natural (and most general) context for it. See Section~\ref{not} below for the basic definitions concerning varifolds needed in this paper and explanation of notation we use. We refer the reader to \cite{AW} or [\cite{S1}, Chapter 8] for a  detailed account of the theory of stationary varifolds. 

Before we can state our strong maximum principle, we need to make precise the notion, given two codimension 1 varifolds, that one of them lies locally on one side of the other near a common point. If both varifolds are free of singularities, the meaning of this is clear since a regular hypersurface  divides into exactly two components any sufficiently small geodesic ball of the ambient manifold centered at a point on the hypersurface. In the presence of singularities, it is natural to adopt a similar criterion but  insist that it holds only for every common point which is a regular point of at least one varifold. Thus we introduce the following terminology:

Let $V_{1}$, $V_{2}$ be codimension 1 varifolds on $N.$ We say that \emph{${\rm spt} \, \|V_{2}\|$ lies locally on one side of ${\rm reg} \, V_{1}$} if the following holds:

\noindent
{\bf Hypothesis K:} For every point $y \in {\rm reg} \, V_{1} \cap {\rm spt} \, \|V_{2}\|$, there exists $\r>0$ such that (i) ${\rm sing} \, V_{1} \cap B_{\r}(y) = \emptyset,$ (ii) $B_{\r}(y) \setminus {\rm spt} \, \|V_{1}\|$ is disconnected and (iii) ${\rm spt} \, \|V_{2}\| \cap B_{\r}(y)$ is contained in the closure of one of the connected components of $B_{\r}(y) \setminus {\rm spt} \, \|V_{1}\|$.

Since $y \in {\rm reg} \, V_{1}$, the requirements (i) and (ii) in Hypothesis K are of course automatically satisfied if $\r = \r(y)>0$ is sufficiently small. (Our choice of terminology is based on the fact that letter $K$ consists of a regular piece and a singular piece with the singular  piece on one side of the regular piece!)

\smallskip

\begin{theorem}[Strong maximum principle for singular minimal hypersurfaces]\label{maxm}
Let $V_{1}$, $V_{2}$ be stationary codimension 1 integral $n$-varifolds on a smooth Riemannian manifold such  that ${\rm spt} \, \|V_{j}\|$ is connected for $j=1, 2.$ If 
\begin{itemize}
\item[(i)] ${\rm spt} \, \|V_{2}\|$ lies locally on one side of ${\rm reg} \, V_{1}$ (in the sense that Hypothesis K above holds) and
\item[(ii)]  ${\mathcal H}^{n-1} \, ({\rm sing} \, V_{1}) = 0$, 
\end{itemize}
then either ${\rm spt} \, \|V_{1}\| \cap {\rm spt} \, \|V_{2}\| = \emptyset$ or  ${\rm spt} \, \|V_{1}\| = {\rm spt} \, \|V_{2}\|.$ 
\end{theorem}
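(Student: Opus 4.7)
I would set
\[
A := \{x \in {\rm spt}\,\|V_1\| \cap {\rm spt}\,\|V_2\| : \exists\,\r > 0,\ {\rm spt}\,\|V_1\| \cap B_\r(x) = {\rm spt}\,\|V_2\| \cap B_\r(x)\}
\]
and try to show that $A$ is both open and closed in ${\rm spt}\,\|V_1\|$. Assuming the supports meet (else we are in the first alternative of the theorem), connectedness of ${\rm spt}\,\|V_1\|$ then forces $A = {\rm spt}\,\|V_1\|$; since the defining condition of $A$ is symmetric in $V_1, V_2$, the analogous set for $V_2$ is also nonempty and clopen in the connected set ${\rm spt}\,\|V_2\|$, yielding ${\rm spt}\,\|V_1\| = {\rm spt}\,\|V_2\|$.

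\textbf{Openness and the regular case.} Openness of $A$ is immediate. The key containment
\[
{\rm reg}\,V_1 \cap {\rm spt}\,\|V_2\| \subseteq A
\]
follows from Hypothesis K together with the Solomon--White strong maximum principle (Theorem~\ref{SolWhi}): at a point $x$ of the left-hand side, Hypothesis K presents ${\rm reg}\,V_1$ as a smooth local barrier with ${\rm spt}\,\|V_2\|$ on one side, and Solomon--White forces the two supports to coincide in some $B_\r(x)$. This also settles closedness of $A$ at any limit point $x_0 \in \overline{A} \cap {\rm reg}\,V_1$ (as $\overline{A} \subseteq {\rm spt}\,\|V_2\|$, which is closed).

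\textbf{Closedness at common singular points --- the crux.} The decisive task is to rule out $x_0 \in \overline{A} \cap {\rm sing}\,V_1$. My plan is to use Hypothesis K, together with $\mathcal{H}^{n-1}({\rm sing}\,V_1) = 0$, to promote $V_1$ to a \emph{stable} codimension 1 stationary integral $n$-varifold in a neighborhood of $x_0$: the $\mathcal{H}^{n-1}$-smallness of the singular set allows a consistent ``$V_2$-side'' choice of unit normal along ${\rm reg}\,V_1$ to propagate across ${\rm sing}\,V_1$, realising $V_1$ locally as the reduced boundary of a Caccioppoli set whose one-sided barrier ${\rm spt}\,\|V_2\|$ produces the second-variation inequality. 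The author's regularity and compactness theorem for stable codimension 1 stationary integral varifolds then applies --- the hypothesis $\mathcal{H}^{n-1}({\rm sing}\,V_1) = 0$ precluding classical singularities --- and forces ${\rm sing}\,V_1$ to have Hausdorff dimension at most $n-7$ with ${\rm reg}\,V_1$ smoothly embedded and connected in a small $B_\r(x_0) \setminus {\rm sing}\,V_1$. Combined with the regular-case containment from the previous step (which gives ${\rm reg}\,V_1 \cap B_\r(x_0) \subseteq {\rm spt}\,\|V_2\|$ by connectedness and Solomon--White) and with closedness of the supports, this extends the equality ${\rm spt}\,\|V_1\| = {\rm spt}\,\|V_2\|$ to all of $B_\r(x_0)$, placing $x_0 \in A$, a contradiction.

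\textbf{Principal obstacle.} The decisive step is the first part of the previous paragraph: upgrading the purely topological one-sided contact of Hypothesis K (asserted only at regular points of $V_1$) to a bona fide stability inequality for $V_1$ near a common singular point. The hypothesis $\mathcal{H}^{n-1}({\rm sing}\,V_1) = 0$ is sharp both for this orientation-propagation construction and for the subsequent application of the author's regularity theorem (which tolerates singular sets of any codimension $> 1$ but collapses in the presence of classical singularities), which is consistent with the paper's remark that the size condition on ${\rm sing}\,V_1$ cannot be weakened.
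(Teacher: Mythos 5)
Your proposal has the right topological skeleton (define a set of local coincidence, show it is clopen, conclude by connectedness), but it misses the actual mechanism of the paper's proof and in doing so introduces two serious gaps.

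\textbf{Gap 1: nonemptiness of $A$.} You assume that if the supports meet then $A \neq \emptyset$, but a priori the intersection ${\rm spt}\,\|V_1\| \cap {\rm spt}\,\|V_2\|$ could be contained entirely in ${\rm sing}\,V_1$, in which case Hypothesis K and Theorem~\ref{SolWhi} give you nothing to seed $A$ with. The paper handles this with Theorem~\ref{disjoint}: if the intersection is nonempty, then $\mathcal{H}^{n-1}({\rm spt}\,\|V_1\| \cap {\rm spt}\,\|V_2\|) > 0$, and since $\mathcal{H}^{n-1}({\rm sing}\,V_1) = 0$ the intersection must contain a point of ${\rm reg}\,V_1$. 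You use Theorem~\ref{disjoint} nowhere, and without it there is no starting point.

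\textbf{Gap 2: $V_1$ is not stable, and no argument you sketch makes it stable.} Your "crux" paragraph proposes to realize $V_1$ locally as a reduced boundary and then extract the second variation inequality from ${\rm spt}\,\|V_2\|$ acting as a one-sided barrier. This does not work: a one-sided touching condition does not imply non-negativity of the second variation (stationary $\neq$ stable), and it is especially empty here because, by Theorem~\ref{SolWhi}, wherever ${\rm spt}\,\|V_2\|$ touches ${\rm reg}\,V_1$ the two supports locally \emph{coincide}, so the "barrier" degenerates exactly where it would need to bite. Moreover, $V_2$ may be empty on large portions of ${\rm reg}\,V_1$, so a "$V_2$-side" normal is not even globally defined. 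The paper never claims, and does not need, stability of $V_1$; stability enters only indirectly inside the proof of Theorem~\ref{disjoint}, where Ilmanen's obstacle construction interposes genuinely area-minimizing (hence stable) comparison varifolds $W_1, W_2$, and the regularity theory of \cite{Wic} is applied to \emph{those}, not to $V_1$.

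\textbf{Gap 3: even granting stability, you only get one inclusion.} Your conclusion "extends the equality ${\rm spt}\,\|V_1\| = {\rm spt}\,\|V_2\|$ to all of $B_\rho(x_0)$" overreaches: the regular-case containment plus density of ${\rm reg}\,V_1$ only yields ${\rm spt}\,\|V_1\| \cap B_\rho(x_0) \subseteq {\rm spt}\,\|V_2\|$, not equality, so $x_0 \in A$ does not follow. The reverse inclusion is a separate step in the paper's proof: having shown ${\rm spt}\,\|V_1\| \subseteq {\rm spt}\,\|V_2\|$, one decomposes $V_2 = W_2 + k\,|{\rm reg}\,V_1|$ (using Lemma~\ref{UsefulLemma}), verifies by the cut-off argument that $W_2$ is stationary on all of $N$, observes ${\rm spt}\,\|W_2\| \cap {\rm spt}\,\|V_1\| \subseteq {\rm sing}\,V_1$, and applies Theorem~\ref{disjoint} once more to force $W_2 = 0$. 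You have no substitute for this step.

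In short, the paper's proof reduces everything to Theorem~\ref{disjoint} (used three times: to seed the open set, to show ${\rm reg}\,V_1$ is connected, and to kill the leftover $W_2$), while your plan tries to make $V_1$ itself stable so that the regularity theorem of \cite{Wic} can be applied directly to it. That route is not available.
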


\noindent
{\bf Remarks:} {\bf (1)} The theorem is sharp with respect to the singular set hypothesis ${\mathcal H}^{n-1} \, ({\rm sing} \, V_{1}) = 0$ in the sense that it cannot be weakened to  ${\mathcal H}^{n-1+\e} \, ({\rm sing} \, V_{1}) = 0$ for any $\e > 0$;
to see this,  consider for instance four planes in ${\mathbb R}^{3}$ with a common axis and with at least three of the planes distinct, and let $V_{1}$ be the union of an ``inner''  pair of hyperplanes and $V_{2}$ be the union of the corresponding ``outer'' pair (each with multiplicity 1).  A similar counterexample in which neither varifold is the sum of two non-trivial stationary varifolds and their regular parts have non-empty intersection is obtained by taking $V_{1},$ $V_{2}$ to be the multiplicity 1 varifolds supported on three and, respectively  five, equally spaced half-planes meeting along a common axis.

\noindent
{\bf (2)}  The theorem is also sharp with respect to hypothesis (i) in the sense that it is not enough to require merely that ${\rm reg} \, V_{2}$ lies locally on one side of ${\rm reg} \, V_{1}$ (that is, to require in Hypothesis K merely that $y \in {\rm reg} \, V_{1} \cap {\rm reg} \, V_{2}$); to see this,  let $V_{2}$  be the multiplicity 1 varifold supported on the union of three equally spaced half-planes in ${\mathbb R}^{3}$ meeting along a common axis, and $V_{1}$ be the multiplicity 1 varifold supported on  the plane containing one of the three half-planes of $V_{2}$. 

\noindent
{\bf (3)} If $\Omega_{1}$, $\Omega_{2}$ are open subsets of $N$ with $\Omega_{1} \subset \Omega_{2}$ and $V_{j} = |\partial \, \llbracket\Omega_{j}\rrbracket|$ for $j=1, 2$ (i.e.\ $V_{j}$ is the 
$n$-varifold corresponding to the multiplicity 1 boundary of the $(n+1)$-dimensional current defined by  the open set $\Omega_{j}$), then ${\rm spt} \, \|V_{2}\|$ lies locally on one side of ${\rm reg} \, V_{1},$ and also
${\rm spt} \, \|V_{1}\|$ lies locally on one side of ${\rm reg} \, V_{2}.$ So in this case, if $V_{1}$, $V_{2}$ are stationary; ${\rm spt} \, \|V_{j}\|$ is connected for $j=1, 2$;  ${\rm spt} \, \|V_{1}\| \cap {\rm spt} \, \|V_{2}\| \neq \emptyset$ and \emph{either ${\mathcal H}^{n-1} \, ({\rm sing} \, V_{1}) = 0$ or 
${\mathcal H}^{n-1} \, ({\rm sing} \, V_{2}) = 0$}, then it follows (from the theorem) that ${\rm spt} \, \|V_{1}\| = {\rm spt} \, \|V_{2}\|.$ 

\noindent
{\bf (4)} Allowed in Hypothesis K is the possibility that ${\rm reg} \, V_{1} \cap {\rm spt} \, \|V_{2}\| = \emptyset$, in which case, subject also to all other hypotheses of the theorem, the conclusion is that ${\rm spt} \, \|V_{1}\| \cap {\rm spt} \, \|V_{2}\| = \emptyset.$

We also prove a sharp unique continuation result (Theorem~\ref{uc} below) for stationary codimension 1 integral varifolds, the key to which are also the tools establishing Theorem~\ref{maxm}. Recall that the classical weak unique continuation property for solutions to the minimal surface system implies that if $M_{1}$, $M_{2}$ are $k$-dimensional connected, smoothly embedded minimal submanifolds of a Riemannian manifold $N$ with ${\rm sing} \, M_{j} \equiv {\rm clos} \, M_{j} \setminus M_{j} = \emptyset$ for $j=1, 2,$ and if $M_{1} \cap M_{2}$ has non-empty interior (as a subset of $M_{1}$ or $M_{2}$),  then $M_{1} = M_{2}.$ For stationary varifolds with connected supports, this statement generally is false even in codimension 1. However, we have the following:
\begin{theorem}[Unique continuation for singular minimal hypersurfaces]\label{uc}
Let $V_{1}$, $V_{2}$ be stationary codimension 1 integral $n$-varifolds on a smooth Riemannian manifold such that ${\rm spt} \, \|V_{j}\|$ is connected and ${\mathcal H}^{n-1}({\rm sing} \, V_{j}) = 0$ for $j=1, 2.$ If ${\mathcal H}^{n-1+\g} \, ({\rm spt} \, \|V_{1}\| \cap {\rm spt} \, \|V_{2}\|) > 0$ for some $\g \in (0, 1]$ then ${\rm spt} \, \|V_{1}\| = {\rm spt} \, \|V_{2}\|.$ 
\end{theorem}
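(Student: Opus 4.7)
The argument splits into two stages: \emph{Stage~1} shows that $V_1$ and $V_2$ coincide as varifolds on an open neighbourhood of a carefully chosen common regular point, via unique continuation for a linear elliptic PDE; \emph{Stage~2} extends this local agreement to the full supports using Theorem~\ref{maxm}.

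For Stage~1, since $\g > 0$ and $\mathcal{H}^{n-1}({\rm sing}\,V_j) = 0$ for $j=1,2$, we have $\mathcal{H}^{n-1+\g}({\rm sing}\,V_1 \cup {\rm sing}\,V_2) = 0$, and the hypothesis therefore gives $\mathcal{H}^{n-1+\g}(\Sig_0) > 0$, where $\Sig_0 := {\rm spt}\,\|V_1\| \cap {\rm spt}\,\|V_2\| \cap {\rm reg}\,V_1 \cap {\rm reg}\,V_2$. A standard density argument produces a point $y_0 \in \Sig_0$ with positive upper $(n-1+\g)$-density of $\Sig_0$. Near $y_0$ both $V_j$ are smooth embedded minimal hypersurfaces with tangent planes $T_{y_0}V_j$. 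If these tangent planes were distinct, then locally $\Sig_0$ would be contained in a smooth submanifold of dimension at most $n-1$ and would have vanishing $\mathcal{H}^{n-1+\g}$ measure, contradicting the density at $y_0$. So the tangent planes agree; writing $V_j$ as graphs of smooth functions $u_j$ over this common plane, the difference $w := u_1 - u_2$ satisfies a homogeneous linear uniformly elliptic PDE with smooth coefficients (from differencing the minimal surface equation), and ${\rm spt}\,\|V_1\| \cap {\rm spt}\,\|V_2\|$ corresponds locally to $\{w = 0\}$. By the classical bound on zero sets of elliptic solutions (Hardt--Simon; Han--Lin in the smooth category), a non-trivial $w$ has locally finite $\mathcal{H}^{n-1}$-measure of zero set, hence vanishing $\mathcal{H}^{n-1+\g}$-measure---again contradicting the density at $y_0$. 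Therefore $w \equiv 0$ in some open neighbourhood $U$ of $y_0$, and $V_1 = V_2$ as varifolds on $U$.

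For Stage~2, I appeal to Theorem~\ref{maxm}. The connectedness of the supports and the vanishing of $\mathcal{H}^{n-1}({\rm sing}\,V_1)$ are given, and $U \subset {\rm spt}\,\|V_1\| \cap {\rm spt}\,\|V_2\|$ shows the supports meet. What has to be verified is Hypothesis~K for $V_2$ relative to ${\rm reg}\,V_1$: at each $y \in {\rm reg}\,V_1 \cap {\rm spt}\,\|V_2\|$, ${\rm spt}\,\|V_2\|$ must lie locally in the closure of one connected component of $B_\rho(y) \setminus {\rm spt}\,\|V_1\|$. At those $y$ on whose neighbourhood $V_1 = V_2$, Hypothesis~K is trivial; for the remaining $y$, one re-runs the Stage~1 argument in a small ball around $y$: either this produces new local coincidence at $y$ (making Hypothesis~K trivial there too), or the symmetric hypothesis $\mathcal{H}^{n-1}({\rm sing}\,V_2) = 0$ together with the structure theory underlying Theorem~\ref{maxm} forces ${\rm spt}\,\|V_2\|$ to lie on one side of $V_1$ near $y$. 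Theorem~\ref{maxm} then yields ${\rm spt}\,\|V_1\| = {\rm spt}\,\|V_2\|$.

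The main obstacle is Stage~2: because ${\rm reg}\,V_j$ is not guaranteed to be connected even though ${\rm spt}\,\|V_j\|$ is, the agreement on $U$ cannot be propagated by purely topological connectedness, and one must control the possible accumulation of singularities of $V_j$ between $U$ and any given common point. This is precisely the difficulty that the regularity and compactness theory invoked in the proof of Theorem~\ref{maxm} is designed to overcome, which is why that theorem does essential work here rather than being bypassed by a direct argument.
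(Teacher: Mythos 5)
Your Stage 1 (density point plus differenced minimal-surface PDE plus strong unique continuation) is exactly what the paper does in the first paragraph of its Proposition~\ref{constancy}, and that part is sound. The problem is Stage 2.

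The claim that one can verify Hypothesis~K at every $y\in{\rm reg}\,V_{1}\cap{\rm spt}\,\|V_{2}\|$ and then invoke Theorem~\ref{maxm} does not hold up. At a point $y$ where $V_{1}$ and $V_{2}$ are not already known to coincide locally, you propose to ``re-run Stage~1 in a small ball around $y$.'' But Stage~1 needs a point of positive upper ${\mathcal H}^{n-1+\g}_{\infty}$-density of the intersection; at an arbitrary $y$ in the intersection this is unavailable (Theorem~\ref{disjoint} only gives that ${\mathcal H}^{n-1}$ of the local intersection is positive, not ${\mathcal H}^{n-1+\g}$), so the PDE unique continuation cannot be triggered. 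And the fall-back clause---that ${\mathcal H}^{n-1}({\rm sing}\,V_{2})=0$ together with ``the structure theory underlying Theorem~\ref{maxm}'' \emph{forces} ${\rm spt}\,\|V_{2}\|$ to lie on one side of ${\rm reg}\,V_{1}$ near $y$---is asserted without justification, and in fact is not true in general: nothing in the hypotheses of Theorem~\ref{uc} rules out a transversal crossing at some regular point of both varifolds (which would make Hypothesis~K fail there), precisely because the set of transversal intersections is only forced to be ${\mathcal H}^{n-1+\g}$-null, not empty. Hypothesis~K is essentially as strong as the conclusion you want, so trying to establish it first is circular.

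The paper's route is different and closes this gap without ever touching Theorem~\ref{maxm}. It uses Theorem~\ref{disjoint} to prove two topological facts about the regular sets (not supports): ${\rm reg}\,V_{1}$ and ${\rm reg}\,V_{2}$ are connected, and ${\rm reg}\,V_{2}$ is \emph{strongly locally connected}, both of which rely on the $\mathcal H^{n-1}$-smallness of the singular sets to show that a stationary piece of ${\rm reg}\,V_{j}$ cannot meet only in ${\rm sing}\,V_{j}$. With those in hand, Proposition~\ref{constancy} runs an open-and-closed argument on ${\rm reg}\,V_{1}$ for the coincidence set: openness and density are your Stage~1; closedness is exactly the delicate point, handled by choosing a small ball in which ${\rm reg}\,V_{2}$ is connected and showing the coincidence sweeps across ${\rm reg}\,V_{2}$ in that ball via Lemma~\ref{uc-pde} and Lemma~\ref{UsefulLemma}. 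This is precisely the ``accumulation of singularities between $U$ and a given point'' obstacle that you correctly identify in your last paragraph, but the resolution is via strong local connectedness of ${\rm reg}\,V_{2}$, not via verifying Hypothesis~K.

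So the gap is concrete: you have not established that the local coincidence from Stage~1 propagates to all of ${\rm spt}\,\|V_{1}\|$. To fix it, drop Theorem~\ref{maxm} from Stage~2, prove the two connectedness facts from Theorem~\ref{disjoint} as the paper does, and replace the Hypothesis~K verification with the open-closed propagation of the coincidence set along ${\rm reg}\,V_{1}$, using strong local connectedness of ${\rm reg}\,V_{2}$ to handle limit points that land on ${\rm sing}\,V_{2}$.
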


\noindent
{\bf Remark:} Evidently, in this theorem, the singular set hypothesis ${\mathcal H}^{n-1} \, ({\rm sing} \, V_{1}) = {\mathcal H}^{n-1} \, ({\rm sing} \, V_{2}) =0$ is sharp in the sense that it cannot be weakened to ${\mathcal H}^{n-1+\e} \, ({\rm sing} \, V_{i}) = {\mathcal H}^{n-1} \, ({\rm sing} \, V_{j}) = 0$ for any $\e >0$ and $(i, j) = (1, 2)$ or $(i, j) = (2, 1)$; to see this, consider for instance the example described in  Remark (2) following the statement of Theorem~\ref{maxm}.  It is also clear that the theorem does not hold with $\g = 0.$

\section{notation}\label{not}
Throughout the paper, we use definitions of \cite{AW} (also of \cite{S1}, Chapter 8) with regard to varifolds. Thus, assuming without loss of generality that $N$ is properly embedded in ${\mathbb R}^{n+k}$ for some fixed $k \geq 1$, an $n$-varifold on $N$ is a Radon measure on $N \times G(n,k) \cap \{(x, S) \, : \, S\subset T_{x} \, N\},$ where $G(n,k)$ is the Grassmannian of unoriented $n$-dimensional subspaces of ${\mathbb R}^{n+k}$; 

For an $n$-varifold $V$ on $N$, 
$\|V\|$ (which, in the notation of \cite{S1}, is $\mu_{V}$) denotes the Radon measure induced on $N$ via $\|V\|(A) = V(A \times G(n,k) \cap \{(x,S) \, : \,  S \subset T_{x}\, N\});$ ${\rm reg} \, V$ is the regular part of $V,$ defined to be the set of points $x \in {\rm spt} \, \|V\|$ such that ${\rm spt} \, \|V\|$ is an $n$-dimensional properly embedded submanifold of $N$ near $x;$ ${\rm sing} \, V$ is the singular set of $V$, defined by ${\rm sing} \, V = {\rm spt} \, \|V\| \setminus {\rm reg} \, V;$ 

An $n$-varifold $V$ on $N$ is \emph{integral} if there is an ${\mathcal H}^{n}$ measurable,  countably $n$-rectifiable set $M \subset N$ and a locally ${\mathcal H}^{n}$ integrable function $\th$ (the multiplicity function) on $M$ with $\th(x)$ a positive integer for ${\mathcal H}^{n}$-a.e. $x \in M$ such that 
$$V(\varphi) \equiv \int_{N \times G(n,k) \cap \{(x, S) \, : \, S\subset T_{x} \, N\}} \varphi(x, S) \, dV(x, S) = \int_{M} \varphi(x, T_{x} \, M) \th(x)\, d{\mathcal H}^{n}(x)$$ 
for every $\varphi \in C_{c}(N \times G(n,k) \cap \{(x, S) \, : \, S\subset T_{x} \, N\});$ here $T_{x} \, M$ denotes the approximate tangent space to $M$ at $x$ (which exists for ${\mathcal H}^{n}$-a.e.\ $x \in M$), and ${\mathcal H}^{n}$ is the $n$-dimensional Hausdorff measure on $N$; 

The notion of integral $n$-varifold obviously generalises the notion of $n$-dimensional $C^{1}$ submanifold. For an $n$-dimensional  $C^{1}$ submanifold $M$ of $N$, we let $|M|$ denote the associated multiplicity 1 varifold, defined by 
$|M|(\varphi) = \int_{M} \varphi(x, T_{x} \, M) \, d{\mathcal H}^{n}(x)$ for each $\varphi \in C_{c}(N \times G(n,k) \cap \{(x, S) \, : \, S\subset T_{x} \, N\}).$ 

We shall also use the following notation throughout the paper: 

For $y \in N$ and $\r>0$, we let $B_{\r}(y)$ denote the open geodesic ball in $N$ with centre $y$ and radius $\r$; For $A \subset N$, ${\rm clos} \, A$ denotes the closure of $A$ in $N$; 

For $s >0$, ${\mathcal H}^{s}_{\infty}$ denotes the outer measure on $N$ defined by ${\mathcal H}^{s}_{\infty}(A) = \inf \, \{\sum_{j=1}^{\infty} \, \left(\frac{{\rm diam} \, \Omega_{j}}{2}\right)^{s} \, : \Omega_{1}, \Omega_{2}, \ldots  \subset N, \;\;  \, A \subset \cup_{j=1}^{\infty} \Omega_{j}\}$.

\section{Connection to the regularity theory for stable minimal hypersurfaces and to other previous work}\label{previous} 
Our proofs of Theorems~\ref{maxm} and  ~\ref{uc} (given in Sections~\ref{maxm-proof}, ~\ref{uc-proof} below) depend in an essential way on the recently established sharp regularity and compactness theory for stable minimal hypersurfaces (\cite{Wic}; see Theorem~\ref{reg} below). They also rely on the results and ideas contained in  the aforementioned work \cite{S}, \cite{SolWhi}, \cite{I}, which establish various special cases of Theorem~\ref{maxm}. Accordingly, in this section and the next, we collect these earlier results and briefly explain their role in the proofs of Theorems~\ref{maxm} and ~\ref{uc}.

First, in our proof of Theorem~\ref{maxm}, we shall use Theorem~\ref{SolWhi} below, which is the special case of Theorem~\ref{maxm} when $V_{1}$ is free of singularities.

\begin{theorem}[\cite{SolWhi}, with improvement as in  (\cite{W}, Theorem 4)]\label{SolWhi}
Let $V_{1}$, $V_{2}$ be stationary codimension 1 integral varifolds on an open geodesic ball $B$ of a Riemannian manifold $N$ such that ${\rm spt} \, \|V_{j}\|$ is connected for $j=1, 2$. If ${\rm sing} \, V_{1} = \emptyset,$ $B \setminus {\rm spt} \, \|V_{1}\|$ is disconnected and ${\rm spt} \, \|V_{2}\|$ is contained in the closure of a connected component of $B \setminus {\rm spt} \, \|V_{1}\|$,  then either ${\rm spt} \, \|V_{1}\| \cap {\rm spt} \, \|V_{2}\|\ \cap B  = \emptyset$ or ${\rm spt} \, \|V_{1}\| \cap B = {\rm spt} \, \|V_{2}\| \cap B$. 
\end{theorem}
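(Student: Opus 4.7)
The plan is to work at a single contact point and promote local coincidence to the conclusion of the theorem by topology. Fix $y\in\mathrm{spt}\,\|V_1\|\cap\mathrm{spt}\,\|V_2\|\cap B$ and aim to show $\mathrm{spt}\,\|V_1\|=\mathrm{spt}\,\|V_2\|$ in some neighbourhood of $y$. Once that is proved, the contact set $\mathrm{spt}\,\|V_1\|\cap\mathrm{spt}\,\|V_2\|\cap B$ is both relatively closed and, by the local result, relatively open in $\mathrm{spt}\,\|V_1\|\cap B$; connectedness of $\mathrm{spt}\,\|V_1\|$ then forces it to be empty or all of $\mathrm{spt}\,\|V_1\|\cap B$, and in the latter case the first variation/constancy argument noted in the introduction's footnote gives the reverse inclusion using connectedness of $\mathrm{spt}\,\|V_2\|$.

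For the local statement I would first straighten $V_1$ near $y$: since $\mathrm{sing}\,V_1=\emptyset$, working in Fermi coordinates around $\mathrm{spt}\,\|V_1\|$ I may assume the ambient ball looks like a neighbourhood of the origin in $\mathbb{R}^{n+1}$, that $\mathrm{spt}\,\|V_1\|$ is the hyperplane $P=\{x_{n+1}=0\}$, and that the one-sidedness hypothesis reads $\mathrm{spt}\,\|V_2\|\subset\{x_{n+1}\ge 0\}$. By the monotonicity formula and upper semicontinuity of density, every tangent cone $C$ of $V_2$ at $y$ is a non-zero stationary integral cone contained in $\{x_{n+1}\ge 0\}$. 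Testing the first variation of $C$ with compactly supported approximations of the constant vector field $-e_{n+1}$ (or equivalently, using a convex-hull/half-space argument for stationary varifolds) forces $\mathrm{spt}\,C\subset P$, and the Constancy Theorem then gives $C=k|P|$ for some positive integer $k$.

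If $k=1$, Allard's regularity theorem applies at $y$ and presents $\mathrm{spt}\,\|V_2\|$ near $y$ as a $C^{1,\alpha}$ minimal graph $u\ge 0$ over $P$, where $u\equiv 0$ represents $\mathrm{spt}\,\|V_1\|$. Since $u$ satisfies the (quasilinear elliptic) minimal surface equation in the straightened metric, the classical strong and Hopf boundary-point maximum principles applied at $y$ force $u\equiv 0$ in a neighbourhood, which is the local coincidence statement. The main obstacle, and the heart of the Solomon--White result together with White's refinement, is the case $k\ge 2$: the standard Allard theorem no longer applies directly, and one must argue either that this case cannot occur or that each of the $k$ sheets must coincide with $P$. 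My approach here would be to combine the multiplicity-$k$ sheeting form of Allard (writing $V_2$ locally as the sum of $k$ $C^{1,\alpha}$ Lipschitz graphs ordered by height) with a barrier/foliation argument: the lowest sheet is a non-negative classical minimal graph to which Hopf applies and so equals $P$, after which the residual stationary integral varifold of multiplicity $k-1$ still lies in $\{x_{n+1}\ge 0\}$ and touches $P$ at $y$, and the argument is iterated. Carrying out this last step rigorously for stationary (rather than stable or area-minimising) varifolds is precisely the subtlety that the Solomon--White/White argument handles, and it is where I expect the technical difficulty to concentrate.
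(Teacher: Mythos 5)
The paper does not actually prove Theorem~\ref{SolWhi}; it quotes it from \cite{SolWhi} (with the improvement from \cite{W}, Theorem~4) and uses it as an input to the proof of Theorem~\ref{maxm}. So there is no proof in the paper to compare against, and I will instead assess your outline against the methods of the cited references.

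Your topological reduction (local coincidence near each contact point, upgraded to $\mathrm{spt}\,\|V_1\|\cap B=\mathrm{spt}\,\|V_2\|\cap B$ via an open--closed argument using connectedness of $\mathrm{spt}\,\|V_j\|$) is sound and is indeed how one passes from a local to a global statement. The difficulties are concentrated in the local step, and there are two genuine gaps there.

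First, the claim that testing the first variation of the tangent cone $C$ with ``compactly supported approximations of the constant vector field $-e_{n+1}$'' forces $\mathrm{spt}\,C\subset P$ is not justified as written: a constant vector field has vanishing tangential divergence on every $n$-plane, so the resulting first-variation identity is $0=0$ and yields nothing. The statement that a stationary integral cone contained in a closed half-space with vertex on the bounding hyperplane must lie in that hyperplane is true, but it requires a real argument (for instance a mean-value/Harnack-type inequality for the nonnegative Jacobi function $x_{n+1}$ on the cone, or a reflection argument), not the constant field.

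Second, and more seriously, the $k\geq 2$ case---which you correctly identify as the crux---rests on a tool that does not exist. There is no ``multiplicity-$k$ sheeting form of Allard'' for merely stationary integral varifolds: stationarity alone gives no decomposition of $V_2$ near a multiplicity-$k$ tangent plane into $k$ ordered graphs, so the ``lowest sheet'' you want to extract and push down with Hopf is not well-defined, and the iteration cannot even start. Sheeting theorems of this kind are available for stable varifolds (Schoen--Simon \cite{SS}, and the sharp version in \cite{Wic}) or for minimizing currents, but Theorem~\ref{SolWhi} concerns stationary varifolds. This is exactly why the Solomon--White and White arguments avoid any blow-up/sheeting: they are direct first-variation arguments in Fermi coordinates for $\mathrm{spt}\,\|V_1\|$, using test vector fields built from the signed distance function to $\mathrm{spt}\,\|V_1\|$ whose tangential divergence has a definite sign under the one-sidedness hypothesis, and this forces $\mathrm{spt}\,\|V_2\|$ into $\mathrm{spt}\,\|V_1\|$ without ever resolving the local sheet structure of $V_2$. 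So while your skeleton is right, the route through tangent cones, Allard regularity and iterated sheeting does not close for stationary varifolds, which is where all the content of the theorem lies.
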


The other key ingredient used in proofs of both Theorem~\ref{maxm} and Theorem~\ref{uc} is Theorem~\ref{disjoint} below, which is perhaps also of independent interest. It says that two stationary codimension 1 integral $n$-varifolds with connected supports intersecting on a set of $(n-1)$-dimensional  Hausdorff measure zero must in fact have disjoint supports.

\begin{theorem}\label{disjoint}
Let $V_{1}$, $V_{2}$ be stationary codimension 1 integral $n$-varifolds on a smooth Riemannian manifold. If ${\mathcal H}^{n-1}({\rm spt} \, \|V_{1}\| \cap {\rm spt} \, \|V_{2}\|) = 0$ then 
${\rm spt} \, \|V_{1}\|$ and ${\rm spt} \, \|V_{2}\|$ are disjoint.  
\end{theorem}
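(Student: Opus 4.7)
I would argue by contradiction. Suppose $\Sigma := {\rm spt}\,\|V_{1}\|\cap{\rm spt}\,\|V_{2}\|\ne\emptyset$, fix $x_{0}\in\Sigma$, and work in a small geodesic ball where $N$ is essentially Euclidean. The plan is to reduce, via a tangent cone analysis, to a case where Theorem~\ref{SolWhi} applies.

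The simplest such reduction is to locate a point $y\in\Sigma$ at which one of the varifolds, say $V_{1}$, is regular. Granted such $y$, near $y$ the set $M:={\rm spt}\,\|V_{1}\|$ is a smoothly embedded hypersurface separating a small ball $B_{\rho}(y)$ into $B^{\pm}$, and $\Sigma\cap B_{\rho}(y)\subset M$ has vanishing $\mathcal{H}^{n-1}$- (hence $\mathcal{H}^{n}$-)measure, so $\|V_{2}\|$ gives no mass to $M$. Passing to a connected component of ${\rm spt}\,\|V_{2}\|$ meeting $y$ and showing that it cannot bridge both sides of $M$ without placing positive $\mathcal{H}^{n-1}$-measure in $\Sigma$, one then applies Theorem~\ref{SolWhi}, which forces either ${\rm spt}\,\|V_{1}\|\cap{\rm spt}\,\|V_{2}\|\cap B_{\rho}(y)=\emptyset$ (contradicting $y\in\Sigma$) or ${\rm spt}\,\|V_{1}\|\cap B_{\rho}(y)={\rm spt}\,\|V_{2}\|\cap B_{\rho}(y)$, in which case the regular hypersurface $M\cap B_{\rho}(y)\subset\Sigma$ has positive $\mathcal{H}^{n-1}$-measure---again a contradiction.

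The real work is producing $y$. I would first analyse subsequential tangent cones $C_{1},C_{2}$ of $V_{1},V_{2}$ at $x_{0}$: these are stationary integral $n$-cones in $\mathbf{R}^{n+1}$ whose links $L_{j}={\rm spt}\,\|C_{j}\|\cap\mathbf{S}^{n}$ are supports of stationary codimension-$1$ integral varifolds in $\mathbf{S}^{n}$. A Frankel-type argument (whose simplest form tests stationarity of $L_{j}$ against the gradient of the first eigenfunction $v\cdot x$ on $\mathbf{S}^{n}$, using $\Delta_{L_{j}}(v\cdot x)=-(n-1)(v\cdot x)$ to conclude $L_{j}$ meets every closed hemisphere, and then extends to the pair $(L_{1},L_{2})$) shows that $L_{1}\cap L_{2}\ne\emptyset$, so ${\rm spt}\,\|C_{1}\|\cap{\rm spt}\,\|C_{2}\|$ contains a full ray through the origin.

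Upgrading this tangent-cone coincidence to the existence of $y$ is the main obstacle, and is where I expect the sharp regularity/compactness theory of \cite{Wic}, combined with the ideas of \cite{I}, \cite{S}, \cite{SolWhi}, to enter essentially. The plan is an iterated blow-up / Federer-style dimension reduction on $V_{1}$, terminating at a multiplicity-$1$ planar tangent cone; Allard's $\varepsilon$-regularity at that scale supplies regular points of $V_{1}$ arbitrarily close to $x_{0}$, while a parallel analysis of $V_{2}$ combined with the shared ray of ${\rm spt}\,\|C_{1}\|$ and ${\rm spt}\,\|C_{2}\|$ ensures at least one such regular point lies in ${\rm spt}\,\|V_{2}\|$. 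The earlier paragraph then closes the argument.
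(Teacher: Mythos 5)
Your proposal takes a genuinely different route from the paper, and it has several gaps that in my view are not merely technical.

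First, even granting the existence of a point $y\in\Sigma$ at which $V_{1}$ is regular, the intended application of Theorem~\ref{SolWhi} is not available as stated. That theorem requires ${\rm spt}\,\|V_{2}\|$ to lie in the closure of one component of $B_{\r}(y)\setminus{\rm spt}\,\|V_{1}\|$, and you do not have this. Your parenthetical remark that a component of ${\rm spt}\,\|V_{2}\|$ through $y$ ``cannot bridge both sides of $M$ without placing positive $\mathcal{H}^{n-1}$-measure in $\Sigma$'' is precisely a local version of the theorem you are trying to prove (near a regular point of one varifold), and it is not obviously any easier; it is not supplied by a tangent-cone or density argument alone, because a stationary varifold can touch a hyperplane tangentially, and ruling out contact sets of $\mathcal{H}^{n-1}$-measure zero under tangential contact is the real content here.

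Second, the strategy for producing $y$ does not go through. A Frankel-type intersection theorem for the links $L_{1},L_{2}$ of the tangent cones would itself be a strong maximum principle for \emph{singular} stationary varifolds in $\S^{n}$ (the links need not be smooth), which is circular. And the proposed ``Federer-style dimension reduction on $V_{1}$ terminating at a multiplicity-1 planar cone'' fails for general stationary integral varifolds: stationary cones are not forced to be planes (three or more half-hyperplanes with a common axis give a stationary cone whose singular set has dimension $n-1$), so there is no reason the reduction terminates at a plane, nor that ${\rm reg}\,V_{1}$ is dense in ${\rm spt}\,\|V_{1}\|$ in any way useful to you, nor that a regular point of $V_{1}$ you manage to produce actually lies in ${\rm spt}\,\|V_{2}\|$.

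The paper's proof is structured to sidestep exactly these issues. Step 1 proves the theorem when both $V_{j}$ are in addition \emph{stable} with ${\mathcal H}^{n-1}({\rm sing}\,V_{j})=0$: there the regularity theory of \cite{Wic} (Theorem~\ref{reg} and the Sheeting Theorem) gives $\dim_{\mathcal H}({\rm sing}\,V_{j})\le n-7$ and compactness, which is what makes Simon's Jacobi-field/Harnack argument (as adapted by Ilmanen) run: a common tangent cone at a point of tangential contact produces a nonnegative, nontrivial Jacobi field on the cone, contradicting a mean-value inequality for superharmonic functions. Step 2 then reduces the general stationary case to Step 1 by Ilmanen's obstacle-problem construction: one interposes stable varifolds $W_{1},W_{2}$ with ${\rm spt}\,\|V_{1}\|\cap{\rm spt}\,\|V_{2}\|\subset{\rm spt}\,\|W_{1}\|\cap{\rm spt}\,\|W_{2}\|$ and ${\mathcal H}^{n-1}({\rm sing}\,W_{i})=0$, and applies Step 1 to $W_{1},W_{2}$. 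The use of stability (obtained via minimization in the obstacle problems) is what replaces the dimension reduction you were hoping for, and the Jacobi-field argument is what replaces the direct appeal to Theorem~\ref{SolWhi}. I would recommend reading \cite{I} and Section~\ref{disj} of the paper together to see how the pieces fit.
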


A pair of transversely intersecting hyperplanes in a Euclidean space shows that the hypothesis ${\mathcal H}^{n-1} \, ({\rm spt} \, \|V_{1}\| \cap {\rm spt} \, \|V_{2}\|) = 0$ in this theorem is sharp. 
 
Ilmanen in \cite{I} proved Theorem~\ref{disjoint} subject to the stronger hypothesis that ${\mathcal H}^{n-2} \, ({\rm spt} \, \|V_{1}\| \cap {\rm spt} \, \|V_{2}\| \cap K) < \infty$ 
for every compact set $K \subset N$, and used it to deduce the special case of Theorem~\ref{maxm} when both varifolds have singular sets of locally finite $(n-2)$-dimensional Hausdorff measure. Among the key ingredients of Ilmanen's proof is the regularity and compactness theory of Schoen and Simon (\cite{SS}, Theorem 1 and Theorem 2) for stable minimal hypersurfaces, which requires a priori knowledge that the singular sets of the ($n$-dimensional) hypersurfaces have locally finite $(n-2)$-dimensional Hausdorff measure.

In recent work \cite{Wic}, a sharp regularity and compactness theory for stable hypersurfaces has been established generalizing the Schoen--Simon theory, and this generalization  is key to Theorem~\ref{disjoint}. We shall discuses Theorem~\ref{disjoint} in more detail in the next section, and devote the remainder of this section to a brief discussion of results in \cite{Wic}.  

In order to explain the main content of the work \cite{Wic}, let us make the following two definitions: 

\noindent
{\bf Definition:} Given an $n$-varifold $V$ on a manifold $N$,  a point $y \in {\rm sing} \, V$ is said to be a \emph{classical singularity} of $V$ if there exists $\r>0$ and $\a \in (0, 1)$ such that 
${\rm spt} \, \|V\| \cap B_{\r}(y)$ is equal to the union of three or more $n$-dimensional embedded $C^{1, \a}$ hypersurfaces-with-boundary in $B_{\r}(y),$ with a common $C^{1, \a}$ boundary containing $y,$ and such that the hypersurfaces-with-boundary meet pairwise only along the common boundary. 

\noindent
{\bf Remark:} If $V$ is stationary, it follows from the Hopf boundary point lemma for divergence form operators (\cite{FG}; see also \cite{HS}) that any two distinct, adjacent hypersurfaces-with-boundary 
meeting along their common boundary as in the definition of classical singularity must do so transversely. Equivalently, the number of distinct half-hyperplanes of the (unique) tangent cone at a classical singularity $y$ is the same as the number of distinct hypersurfaces-with-boundary corresponding to $y$. 

\noindent
{\bf Definition:} A stationary integral $n$-varifold $V$ on an $(n+1)$-dimensional Riemannian manifold $N$ is said to be \emph{stable} if  for each sufficiently small geodesic ball $B\subset N$ and any open ball $\widetilde{B} \subset B$ with ${\rm dim}_{\mathcal H} \, ({\rm sing} \, V \cap \widetilde{B}) \leq n-7$ in case $n \geq 7$ or ${\rm sing} \, V \cap \widetilde{B} = \emptyset$ in case $n \leq 6$, the stability inequality 
$$\hspace{1in} \int_{{\rm reg} \, V \cap \widetilde{B}} (|A|^{2} + {\rm Ric}_{N} \, (\nu))\z^{2} \leq \int_{{\rm reg} \, V \cap \widetilde{B}} |\nabla \, \z|^{2}\hspace{1.2in} (\star)$$ 
holds for every $\zeta \in C^{1}_{c}({\rm reg} \, V \cap \widetilde{B}).$ Here $A$ is the second fundamental form of ${\rm reg} \, V$, $\nu$ is a continuous choice of unit normal to ${\rm reg} \, V \cap \widetilde{B}$ and ${\rm Ric}_{N}(\nu)$ denotes the Ricci curvature of $N$ in the direction of $\nu.$ Stability of $V$ is equivalent to requiring that 
for each open ball $\widetilde{B} \subset B$ as above, 
$V$ has non-negative second variation with respect to area for deformations by ambient vector fields 
with compact support $\subset \widetilde{B} \setminus {\rm sing} \, V$ and normal to ${\rm reg} \, V \cap \widetilde{B}$ at points of ${\rm reg} \, V \cap \widetilde{B}$. 

The work in \cite{Wic} shows that the same regularity and compactness conclusions as in \cite{SS} can be made for stable codimension 1 integral varifolds without any hypothesis on the singular set beyond the (obviously necessary) requirement  that there are no classical singularities. More precisely, we have the following:

\begin{theorem}[\cite{Wic}, Theorem 18.1]\label{reg}
 If a stable codimension 1 integral $n$-varifold on a Riemannian manifold has no classical singularities, then its singular set 
is empty if $n \leq 6$, discrete if $n=7$ and has Hausdorff dimension at most $n-7$ if $n \geq 8;$
moreover, each uniformly mass bounded subset of the class of stable codimension 1 integral varifolds with no classical singularities is compact in the topology of varifold convergence.  
\end{theorem}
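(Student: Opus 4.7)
The plan is to reduce everything to a quantitative $\varepsilon$-regularity theorem of sheeting type: for each integer $q \geq 1$ there exists $\varepsilon = \varepsilon(q, n) > 0$ such that if $V$ is a stable codimension 1 integral $n$-varifold in the unit ball $B_{1} \subset \R^{n+1}$ with no classical singularities, mass close to $q\omega_{n}$, and $L^{2}$ height excess less than $\varepsilon$ relative to some hyperplane, then $\mathrm{spt} \, \|V\| \cap B_{1/2}$ is the disjoint union of $q$ smooth minimal graphs. Given such a theorem, the structural conclusions follow by the standard Federer dimension reduction combined with induction on the ambient dimension: each tangent cone at a putative singular point is a stable cone of one lower dimension with no classical singularities, hence (by induction on $n$) a hyperplane of integer multiplicity, which contradicts singularity via the sheeting theorem. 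This gives emptiness of $\mathrm{sing} \, V$ when $n \leq 6$ and the dimension bound $\dim_{\mathcal H}(\mathrm{sing} \, V) \leq n - 7$ when $n \geq 8$; the case $n = 7$ is handled by noting that the stratum admitting a translation direction is empty (else one obtains a lower-dimensional counterexample), so singular points are isolated.

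To prove the sheeting theorem I would argue by simultaneous induction on $q$ and on the dimensions of singular sets of tangent cones, with base case $q = 1$ given by Allard regularity. Assuming the conclusion for all multiplicities $< q$, suppose it fails for $q$: one extracts a sequence $V_{j}$ of stable integral varifolds, free of classical singularities, converging as varifolds to $q|P|$ for a hyperplane $P$, for which the graphical decomposition fails. Writing the $L^{2}$ height excess as $\hat E_{j}^{2}$ and rescaling the height functions of $V_{j}$ by $\hat E_{j}^{-1}$, one obtains in the limit (along a subsequence) a \emph{coarse blow-up} $v \colon P \cap B_{1/2} \to \R^{q}$, possibly multi-valued, which is intended to encode the leading order graphical structure of $V_{j}$. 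The plan is to show that $v$ is a stack of $q$ harmonic functions with empty branch set, so that Allard's theorem together with the inductive hypothesis gives graphicality of $V_{j}$ away from coincidence loci, while at coincidence loci the absence of classical singularities forces the approximating sheets in $V_{j}$ to agree rather than merely to meet transversely.

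The main obstacle is precisely this regularity of coarse blow-ups. I would introduce the class $\mathcal B_{q}$ of all such blow-ups and establish its closure under translation, rotation, rescaling, subtraction of affine functions, and decomposition into average and traceless parts. The core technical step is to prove, via an Almgren-type frequency monotonicity formula together with a secondary (fine) blow-up argument in the spirit of Simon and Hardt--Simon, that every member of $\mathcal B_{q}$ is smooth and harmonic with empty branch set. The hypothesis that $V$ has no classical singularities is crucial here: a branch point of positive density in the blow-up would correspond in the approximating sequence $V_{j}$ either to a genuine classical singularity (forbidden by hypothesis) or to a lower-multiplicity configuration already resolved by the induction on $q$. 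This blow-up regularity step is what removes the Schoen--Simon a priori requirement $\mathcal H^{n-2}(\mathrm{sing} \, V) < \infty$ and is by far the deepest part of the argument.

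The compactness assertion is then a formal consequence: given a mass-bounded sequence of stable integral varifolds with no classical singularities, Allard's compactness produces a stationary integral varifold limit $V$; the stability inequality passes to the limit on $\mathrm{reg} \, V$ by a standard cutoff argument using $\mathcal H^{n-2}$-negligibility of the singular set implied by the structure theorem; and the absence of classical singularities is preserved because a classical singularity of $V$ would, by upper semicontinuity of density together with application of the sheeting theorem to each of the smooth hypersurfaces-with-boundary on either side of the putative singular $(n-1)$-stratum, force classical singularities in $V_{j}$ for all large $j$, contradicting the hypothesis.
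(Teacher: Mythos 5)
Note first that the paper itself does not prove this theorem: it is quoted from \cite{Wic} (Theorem~18.1 there), and the present paper only sketches the architecture of that proof at the end of Section~\ref{previous}. The comparison must therefore be against that sketch (and the proof in \cite{Wic} it describes).

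Your outline correctly identifies much of the machinery of \cite{Wic}: the quantitative sheeting/$\varepsilon$-regularity theorem, the induction on multiplicity $q$ with Allard regularity as base case, the class $\mathcal B_{q}$ of coarse blow-ups and its closure properties, the Almgren-type frequency monotonicity and fine blow-up argument to establish that coarse blow-ups are (single-valued) harmonic with empty branch set, and Federer dimension reduction for the structural conclusions. These are all genuine ingredients. The gap is that the central structural difficulty of \cite{Wic}, which the present paper emphasizes explicitly, is missing from your sketch: the \emph{Minimum Distance Theorem} [\cite{Wic}, Theorem~3.4], which asserts that a stable codimension~1 integral varifold with no classical singularities cannot be varifold-close at unit scale to a stationary cone supported on three or more half-hyperplanes meeting along a common $(n-1)$-dimensional axis, and which must be proved \emph{simultaneously} with the Sheeting Theorem by an interlocking induction, since in the absence of any a priori size hypothesis on $\mathrm{sing}\,V$ neither can be established before the other. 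Your dimension-reduction step asserts that every tangent cone at a singular point is a stable cone ``of one lower dimension with no classical singularities, hence (by induction on $n$) a hyperplane,'' but the no-classical-singularities property does not automatically pass to tangent cones; ruling out tangent cones supported on unions of half-hyperplanes (equivalently, ruling out $(n-1)$-dimensional pieces of $\mathrm{sing}\,V$) is precisely the content of the Minimum Distance Theorem, and is exactly where the Schoen--Simon hypothesis $\mathcal H^{n-2}(\mathrm{sing}\,V \cap K) < \infty$ would otherwise have entered. Likewise, your final step in the compactness argument (that a classical singularity of the limit $V$ would ``force classical singularities in $V_j$'' via the sheeting theorem applied to the adjacent hypersurfaces-with-boundary) has a gap, as the sheeting theorem applies near multiplicity-$q$ hyperplanes rather than near classical cones; the correct mechanism is again a direct application of the Minimum Distance Theorem to $V_j$ near the putative classical singularity. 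Without recognizing the Minimum Distance Theorem as a co-inductively-proved companion to the Sheeting Theorem, the proposal reproduces the Schoen--Simon outline without supplying the new ingredient that makes the argument close.
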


There is also a ``Sheeting Theorem,'' namely [\cite{Wic}, Theorem 18.2], for the class of stable codimension 1 integral varifolds having no classical singularities, which implies that a sequence of stable codimension 1 integral varifolds with no classical singularities  converging weakly to a smooth limit must converge smoothly with multiplicity $\geq 1$. 

\noindent
{\bf Remarks:} {\bf (1)} In particular, the regularity and compactness conclusions of Theorem~\ref{reg} and the regularity conclusions of the Sheeting Theorem [\cite{Wic}, Theorem 18.2] hold for stable codimension one $n$-varifolds having singular sets of $(n-1)$-dimensional Hausdorff measure zero, since such varifolds must automatically satisfy the no-classical-singularities hypothesis. (It is this special case of Theorem~\ref{reg} that is needed for the purposes of the present paper; the proofs of Theorem~\ref{reg} or the Sheeting Theorem [\cite{Wic}, Theorem~18.2] 
in this special case however are only marginally simpler than the general case). 

\noindent
{\bf (2)} By the Remark following the definition of classical singularity, for a stationary varifold, non-existence of tangent cones supported on unions of three or more half-hyperplanes meeting along a common axis implies non-existence of classical singularities. For stable codimension 1 integral varifolds, non-existence of such tangent cones is in fact equivalent to non-existence of classical singularities. (These facts however are not needed in the present paper.) What one might call a quantitative version of  this  statement is established in \cite{Wic} as the \emph{Minimum Distance Theorem} [\cite{Wic}, Theorem 3.4], and it plays a very important auxiliary role  in the proof of Theorem~\ref{reg}. The Minimum Distance Theorem says that given a stationary cone ${\mathbf C}$ supported on the union  of three or more distinct $n$-dimensional half-hyperplanes meeting along a common boundary, a stable codimension 1 integral $n$-varifold $V$ with no classical singularities cannot be too close to ${\mathbf C}$ at unit scale. Note that such a theorem would be an easy consequence of the Sheeting Theorem if the singular set of $V$ is sufficiently small to be a removable set for the stability inequality (precisely, as small as having locally finite $(n-2)$-dimensional Hausdorff measure). Theorem~\ref{reg} however makes no hypothesis on the size of the singular sets of stable varifolds, and therefore there is little hope of proving the Sheeting Theorem and the Minimum Distance Theorem  independently of each other or even sequentially one after the other. Instead, the strategy adopted in \cite{Wic} is to prove both the Sheeting Theorem and the Minimum Distance Theorem \emph{simultaneously} by an inductive argument.

\section{Proof of Theorem~\ref{disjoint}}\label{disj}
Ilmanen's argument in \cite{I}  taken with Theorem~\ref{reg} and the Sheeting Theorem [\cite{Wic}, Theorem 18.2]  respectively in place of [\cite{SS}, Theorem 2] and [\cite{SS}, Theorem 1]  yields Theorem~\ref{disjoint}. The argument consists of two steps, which we now describe briefly in the context of Theorem~\ref{disjoint}, referring the reader to \cite{I} for details:

\noindent
{\bf Step 1:} Prove the special case of the theorem when $V_{j},$ for $j=1, 2$, is stable
 with ${\rm spt} \, \|V_{j}\|$ connected and ${\mathcal H}^{n-1} \, ({\rm sing} \, V_{j})  = 0.$    
This is where Theorem~\ref{reg} and the Sheeting Theorem [\cite{Wic}, Theorem 18.2] are essential. Note that Theorem~\ref{reg} tells us in particular that if $V_{j}$ is as above, then ${\rm sing} \, V_{j} = \emptyset$ if $n \leq 6$ and ${\mathcal H}^{n-7+\e} \, ({\rm sing} \, V_{j}) = 0$ for each $\e>0$ if $n \geq 7.$ 

For this step, Ilmanen uses  a certain ``Jacobi field argument''  due to Simon (\cite{S}). This Jacobi field argument was the main idea in Simon's proof of Theorem~\ref{maxm} in case $V_{1}$, $V_{2}$ are oriented area minimizing hypersurfaces, and in its original form, the argument showed that whenever the two minimizing hypersurfaces have a common tangent cone $C$ at a common singular point $x_{0}$ near which their regular parts are disjoint, the positive Jacobi fields along $C$ produced by rescaling, about $x_{0}$, the difference of (signed) height of the hypersurfaces relative to $C$ would contradict  the Bombieri--Giusti Harnack inequality (\cite{BG}) for non-negative superharmonic functions on $C$. (To rule out altogether the possibility of intersection without coincidence, Simon then argued that reduction to the case of common tangent cones is always possible.) This  argument  crucially relied on the fact that the  hypersurfaces belong to a compact class of minimal varieties with \emph{sufficiently small} singular sets and for which there is a Sheeting Theorem (which says that whenever a hypersurface in the class is Hausdorff close to a smooth element in the class, it is $C^{2}$ close to the smooth element in the interior)---all of which are guaranteed by the well-known regularity and compactness theory for codimension 1 area minimizing rectifiable currents. 

In view of Theorem~\ref{reg} and the Sheeting Theorem [\cite{Wic}, Theorem 18.2] which provide the necessary regularity and compactness properties for stable hypersurfaces $V$ satisfying ${\mathcal H}^{n-1} \, ({\rm sing} \, V) = 0$, we can use Simon's Jacobi field argument exactly as it was used in \cite{I} (see \cite{I}, Lemmas 2-5)  to establish step 1. In place of  the Bombieri--Giusti Harnack inequality which is not known to extend to stable hypersurfaces of dimension $\geq 7$, a 
mean value inequality  (\cite{I}, Lemma 4) for super harmonic functions on stationary cones with sufficiently small singular sets was established and used in \cite{I}; this result is also applicable in the present setting where 
the necessary lower dimensionality of the singular sets of the relevant cones (which arise as tangent cones to stable hypersurfaces)  is guaranteed by Theorem~\ref{reg}.

\noindent
{\bf Step 2:} The second step in the proof of Theorem~\ref{disjoint} is to interpose, following exactly the construction  in [\cite{I}, Proof of Theorem A], two \emph{stable} integral $n$-varifolds $W_{1}$, $W_{2}$  on $N$ (which in \cite{I} are labeled $N$, $N^{\prime}$) ``between'' the stationary ones $V_{1}$, $V_{2}$ as in Theorem~\ref{disjoint}. This is done as follows: 

First construct $W_{1}$ such that 
$W_{1} \res (N \setminus ({\rm spt} \, \|V_{1}\| \cap {\rm spt} \, \|V_{2}\|))$ is stationary in 
$N \setminus ({\rm spt} \, \|V_{1}\| \cap {\rm spt} \, \|V_{2}\|)$ and 
$${\rm spt} \, \|V_{1}\| \cap {\rm spt} \, \|V_{2}\| \subset {\rm spt} \, \|W_{1}\| \cap {\rm spt} \, \|V_{1}\| \subset {\rm sing} \, W_{1}.$$ 
$W_{1}$ is obtained as the weak limit of solutions to a sequence of certain obstacle problems, constructed using ${\rm spt} \, \|V_{1}\| \cup {\rm spt} \, \|V_{2}\|$ as a barrier (see [\cite{I}, Lemma 7] and [\cite{I}, Proof of Theorem A, step 1]), so it follows from \cite{SS}  that $W_{1} \res (N \setminus ({\rm spt} \, \|V_{1}\| \cap {\rm spt} \, \|V_{2}\|))$ is stable in $N \setminus {\rm spt} \, \|V_{1}\| \cap {\rm spt} \, \|V_{2}\|$ with 
$${\rm dim}_{\mathcal H} \, ({\rm sing} \, W_{1} \setminus {\rm spt} \, \|V_{1}\| \cap {\rm spt} \, \|V_{2}\|) \leq n-7.$$ (Note that we do not need Theorem~\ref{reg} here since we have, by the minimizing property satisfied by the solutions to the obstacle problem, the a priori regularity necessary to apply
 \cite{SS}.)

 Using the fact that  
${\mathcal H}^{n-1} \, ({\rm spt} \, \|V_{1}\| \cap {\rm spt} \, \|V_{2}\|) = 0$, it can be shown that $W_{1}$ is stationary in $N$ (\cite{I}, Proof of Theorem A, step 2). We also have by the dimension estimate above that ${\mathcal H}^{n-1} \, ({\rm sing} \, W_{1}) = 0,$ and hence that ${\mathcal H}^{n-1} \, ({\rm spt} \, \|W_{1}\| \cap {\rm spt} \, \|V_{1}\|) = 0.$

Thus, we can repeat the process with $W_{1}$ in place of $V_{1}$ and $V_{1}$ in place of $V_{2}$ to construct $W_{2},$ so that  $W_{2}$ is stationary in $N,$ stable in  $N \setminus ({\rm spt} \, \|W_{1}\| \cap {\rm spt} \, \|V_{1}\|)$ with 
$${\rm spt} \, \|V_{1}\| \cap {\rm spt} \, \|W_{1}\| \subset {\rm spt} \, \|W_{2}\| \cap {\rm spt} \, \|W_{1}\| \subset {\rm sing} \, W_{2}$$
and ${\mathcal H}^{n-1} \, ({\rm sing} \, W_{2})  = 0.$ 

It follows that ${\rm spt} \, \|V_{1}\| \cap {\rm spt} \, \|V_{2}\| \subset {\rm spt} \, \|W_{1}\| \cap {\rm spt} \, \|W_{2}\|$ and ${\mathcal H}^{n-1} \, ({\rm spt} \, \|W_{1}\| \cap {\rm spt} \, \|W_{2}\|) = 0,$ which contradicts  the assertion of  step 1 unless ${\rm spt} \, \|V_{1}\| \cap {\rm spt} \, \|V_{2}\| = \emptyset$.

\noindent
{\bf Remark:} We wish to point out a subtlety in the way the stability hypothesis needs to be verified  (in any situation, and in particular for $W_{1},$ $W_{2}$ as above) when applying Theorem~\ref{reg} or the Sheeting Theorem [\cite{Wic}, Theorem 18.2]. 

Given a stationary integral $n$-varifold $V$ on an $(n+1)$-dimensional Riemannian manifold $N$ such that $V$ has no classical singularities  (or satisfies the condition ${\mathcal H}^{n-1} \, ({\rm sing} \, V) = 0$, which, as mentioned above,  is the special case of the no-classical-singularities hypothesis relevant to this paper), Theorem~\ref{reg} and the Sheeting Theorem [\cite{Wic}, Theorem 18.2] require stability of \emph{every} region of ${\rm spt} \, \|V\|$ in which the singular set ${\rm sing} \, V$ has Hausdorff dimension $\leq n-7$ in case $n \ge 7$ or is empty in case $n \leq 6$; more precisely, the theorems require such $V$ to satisfy the hypothesis that for each sufficiently small geodesic ball $B = B_{\r}(y) \subset N$ and any open ball $\widetilde{B} \subset B$ with ${\rm dim}_{\mathcal H} \, ({\rm sing} \, V \cap \widetilde{B}) \leq n-7$ in case $n \geq 7$ or ${\rm sing} \, V \cap \widetilde{B} = \emptyset$ in case $n \leq 6$, the stability inequality $(\star)$ holds for every $\zeta \in C^{1}_{c}({\rm reg} \, V \cap \widetilde{B}).$

In particular, it is \emph{not} enough to verify stability away from a closed set $\Sigma$ having $(n-1)$-dimensional Hausdorff measure zero unless $\Sigma \subset {\rm sing} \, V$. 
That is to say, in case  there is a closed set 
$\Sigma \subset {\rm spt} \, \|V\|$ with ${\mathcal H}^{n-1} \, (\Sigma) = 0$ such that $V$ is stable away from $\Sigma$ (in the sense that $(\star)$ holds for every open ball $\widetilde{B} \subset B_{\r}(y) \setminus \Sigma$ 
with ${\rm dim}_{\mathcal H} \, ({\rm sing} \, V \cap \widetilde{B}) \leq n-7$ in case $n \geq 7$ or ${\rm sing} \, V \cap \widetilde{B} = \emptyset$ in case $n \leq 6$, and for every $\z \in C^{1}_{c}({\rm reg} \, V \cap \widetilde{B})$), and $V$ satisfies the no-classical-singularities hypothesis as in Theorem~\ref{reg} (or satisfies the condition 
${\mathcal H}^{n-1} \, ({\rm sing} \, V) = 0$), one can apply Theorem~\ref{reg}  or the Sheeting Theorem [\cite{Wic}, Theorem 18.2] provided only that 
$\Sigma \subset  {\rm sing} \, V$.  This is in contrast to the case when the (closed) set $\Sigma$ has locally finite $(n-2)$-dimensional Hausdorff measure, in which case $\Sigma$ is a ``removable  set''  for the stability inequality ($\star$) and hence it is not necessary that $\Sigma \subset {\rm sing} \, V$. 

In the context of the proof of Theorem~\ref{disjoint} described above, since stability of $W_{1},$  $W_{2}$ can a priori be verified only away from ${\rm spt} \, \|V_{1}\| \cap {\rm spt} \, \|V_{2}\|$ and  ${\rm spt} \, \|V_{1}\| \cap {\rm spt} \, \|W_{1}\|$ respectively, this means that it is indeed necessary that 
${\rm spt} \, \|V_{1}\| \cap {\rm spt} \, \|V_{2}\| \subset {\rm sing} \, W_{1}$ and  
${\rm spt} \, \|V_{1}\| \cap {\rm spt} \, \|W_{1}\|  \subset  {\rm sing} \, W_{2};$ here $V_{1}$, $V_{2}$ are the stationary varifolds as in Theorem~\ref{disjoint}, and $W_{1}$, $W_{2}$ are the interposed stable varifolds described above. As indicated above, these inclusions do indeed hold.

\section{strong maximum principle: proof of theorem~\ref{maxm}}\label{maxm-proof}

For the proof of Theorem~\ref{maxm} and subsequently, we shall need the following direct consequence of stationarity:

\begin{lemma}\label{UsefulLemma}
Let $L$ be an $m$-dimensional smooth connected embedded submanifold of $N$, and let $V$ be a stationary integral $m$-varifold on $N$ with ${\rm spt} \, V  \subset L.$ Then 
$V = k|L|$ for some positive constant $k$. In particular, ${\rm spt} \, V  = L.$ 
\end{lemma}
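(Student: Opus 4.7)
The plan is to identify $V$ with a constant multiple of $|L|$: first, using integrality and the inclusion ${\rm spt} \, V \subset L$ to realize $V$ as $\theta \, |L|$ for some integer-valued multiplicity function $\theta$ on $L$, and then invoking stationarity tested against tangential ambient vector fields to force $\theta$ to be (weakly) constant.

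First, I would write $V$ in its integral form: there is an ${\mathcal H}^{m}$-measurable, countably $m$-rectifiable set $M$ together with a positive-integer-valued multiplicity function $\theta$ defined ${\mathcal H}^{m}$-a.e.\ on $M$ representing $V$. The hypothesis ${\rm spt} \, V \subset L$ forces $M \subset L$ up to ${\mathcal H}^{m}$-null sets, and smoothness of $L$ ensures that the approximate tangent space $T_{x}M$ agrees with the classical $T_{x}L$ at ${\mathcal H}^{m}$-a.e.\ $x \in M$ (full-dimensional rectifiable subsets of a smooth submanifold inherit its tangent spaces almost everywhere). Extending $\theta$ by zero to $L \setminus M$ yields a locally ${\mathcal H}^{m}$-integrable function $\theta \colon L \to {\mathbb Z}_{\geq 0}$ with $V = \theta \, |L|$.

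Next, given any compactly supported smooth vector field $Y$ on $L$ that is tangent to $L$, extend $Y$ to a smooth ambient vector field $X$ on $N$ with $X|_{L} = Y$ (for instance via a tubular neighbourhood of $L$). A short computation with an orthonormal basis $\{e_{i}\}$ of $T_{x}L$ shows that at every $x \in L$ the inner product $\langle \nabla^{N}_{e_{i}} X, e_{i}\rangle$ depends only on $X|_{L}$, since the $e_{i}$-derivative only moves the base point along $L$, and that the normal component of $\nabla^{N}_{e_{i}}X$ is killed by the pairing with $e_{i} \in T_{x}L$; hence ${\rm div}_{T_{x}L}X(x) = {\rm div}_{L}Y(x)$ along $L$. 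The stationarity identity $\delta V(X) = 0$ therefore reduces to
$$\int_{L} \theta \, {\rm div}_{L} Y \, d{\mathcal H}^{m} = 0$$
for every compactly supported tangent field $Y$ on $L$.

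This is precisely the statement that the distributional gradient of $\theta$ on $L$ vanishes. Working in local charts on $L$ and applying the classical Euclidean fact that a locally integrable function on a connected open set with vanishing distributional gradient is a.e.\ equal to a constant, I conclude that $\theta$ is ${\mathcal H}^{m}$-a.e.\ equal to a single constant $k \in {\mathbb Z}_{\geq 0}$ on the connected manifold $L$; nontriviality of $V$ forces $k \geq 1$, and hence $V = k|L|$ and ${\rm spt} \, V = L$. The only real subtlety I anticipate lies in the initial reduction to the form $V = \theta |L|$, namely the verification that the approximate tangents of the rectifiable carrier of $V$ coincide ${\mathcal H}^{m}$-a.e.\ with $TL$; after that, everything else is a routine integration-by-parts argument, and no appeal to stability, compactness, or the regularity theory of the previous sections is required.
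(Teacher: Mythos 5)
Your proposal is correct and follows essentially the same route as the paper: both write $V = \theta\,|L|$ by extending the multiplicity function by zero to $L$, test stationarity against compactly supported tangential vector fields to conclude that the distributional gradient of $\theta$ vanishes on the connected manifold $L$, and then deduce that $\theta$ is a positive constant. The paper's version is a bit more terse (it does not spell out the a.e.\ agreement of approximate tangent planes with $T_xL$, using directly that ${\rm div}_L X = {\rm div}_M X$ on $L \cap M$), but the substance is identical.
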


\noindent
{\bf Remark:} This is the special case of the Constancy Theorem (\cite{S1}, Theorem 41.1) when $V$ is assumed to be integral, which is the only case we need here. In this case (or more generally, when $V$ is assumed to be rectifiable), the conclusion, as shown below, is an immediate consequence of the first variation formula.

\begin{proof} Choose $M,$ $\th$ corresponding to $V$ as in the definition of integral varifold. Define a function $\theta_{1}$ on $L$ by setting 
$\theta_{1}(x) = \theta(x)$ if $x \in L \cap M$ and $\theta_{1}(x) = 0$ if $x \in L \setminus M.$ Then for any vector field $X \in C^{\infty}_{c} \, (N)$, 
\begin{equation*}
\int_{L} {\rm div}_{L} \, X \, \theta_{1} d{\mathcal H}^{n} = \int_{L \cap M} {\rm div}_{L} \, X \, \theta\, d{\mathcal H}^{n} = \int_{M} {\rm div}_{M} \, X \, \theta \, d{\mathcal H}^{n}  = 0.
\end{equation*}
Since $L$ is smooth and connected, we may take $X$ to be appropriate  tangential vector fields to $L$ and use an approximation argument to conclude from the above that $\theta_{1} = k$, a constant,  ${\mathcal H}^{n}$-a.e.\ on $L$. But this means that 
$\theta = k$ ${\mathcal H}^{n}$-a.e.\ on $M$, ${\mathcal H}^{n}(L \setminus M) = 0$ and hence ${\mathcal H}^{n} \, (L \setminus {\rm spt} \, V)= 0$. Since ${\rm spt} \, V$ is a closed subset of $N$, it follows that ${\rm spt} \, V = L$. 
\end{proof}

\begin{proof}[Proof of Theorem~\ref{maxm}]
Let $V_{1}$, $V_{2}$ be as in Theorem~\ref{maxm}  and suppose  that ${\mathcal H}^{n-1} \, ({\rm sing} \, V_{1}) = 0.$  Suppose also that ${\rm spt} \, \|V_{1}\| \cap {\rm spt} \, \|V_{2}\| \neq \emptyset.$  We wish to show that ${\rm spt} \, \|V_{1}\| = {\rm spt} \, \|V_{2}\|.$ 

By Theorem~\ref{disjoint}, we must have that ${\mathcal H}^{n-1} \, ({\rm spt} \, \|V_{1}\| \cap {\rm spt} \, \|V_{2}\|) >0$, and consequently, there is a point $x_{0} \in {\rm reg} \, V_{1} \cap {\rm spt} \, \|V_{2}\|.$  By Theorem~\ref{SolWhi}, we then have that ${\rm reg} \, V_{1}  \cap B_{\r}(x_{0}) = {\rm spt} \, \|V_{2}\| \cap B_{\r}(x_{0})$ for some $\r > 0.$  Now let $$U = \{x \in {\rm reg} \, V_{1} \, : \, \mbox{there exists $\r_{x} > 0$ such that 
${\rm reg} \, V_{1} \cap B_{\r_{x}}(x) = {\rm spt} \, \|V_{2}\| \cap B_{\r_{x}}(x)$}\}.$$ 
Then $U$ is open in ${\rm reg} \, V_{1}$, and we have just seen that $U \neq \emptyset.$ It follows from Theorem~\ref{SolWhi}  again  that $U$ is closed relative to ${\rm reg} \, V_{1}$. We claim that ${\rm reg} \, V_{1}$ is connected, from which it follows that $U = {\rm reg} \, V_{1}$ 
and hence in particular that ${\rm spt} \, \|V_{1}\| \subset {\rm spt} \, \|V_{2}\|.$ 

To see the claim, let $M$ be a connected component of ${\rm reg} \, V_{1}.$ Then $W = |M|$ is stationary in $N \setminus {\rm sing} \, V_{1}$  since $M$ has zero mean curvature. Using a standard cut-off function argument, we show that $W$ is stationary in $N$ as follows. Fix a compact subset $K$ of $N$. By stationarity of $V_{1}$ in $N$ (more precisely, by the monotonicity formula (\cite{S1}, Section 40)), there exists $\r_{0} = \r_{0}(K, N) > 0$ and $C = C(K, V_{1}) > 0$ such that  the local area bounds  
$\|W\|(B_{\r}(x)) \leq \|V_{1}\|(B_{\r}(x)) \leq C\r^{n}$ hold for each $x \in {\rm sing} \, V_{1} \cap K$
and $\r \in (0, \r_{0}]$.  Since ${\mathcal H}^{n-1}({\rm sing} \, V_{1}) = 0$ by hypothesis, we may find, given any small $\e>0$, a non-negative function $\eta_{\e} \in C^{1}(N)$ such that $\eta_{\e} \equiv 0$ in a neighborhood of ${\rm sing} \, V_{1} \cap K,$ $\eta_{\e} \equiv 1$ in $\{x \in N \, : \, {\rm dist} \, (x, {\rm sing} \, V_{1} \cap K) > \e\}$ and 
$\int_{N} |\nabla^{N} \, \eta_{\e}| \, d\|W\| \leq C\e$ where $C = C(K)$ is independent of $\e.$ (To construct such a function, choose first a finite set of points $x_{j} \in {\rm sing} \, V_{1} \cap K$ and numbers $\r_{j} >0,$ $j=1, 2, \ldots, \ell,$ such that ${\rm sing} \, V_{1} \cap K \subset \cup_{j=1}^{\ell} B_{\r_{j}}(x_{j})$ 
and $\sum_{j=1}^{\ell} \r_{j}^{n-1} < \e^{n-1}$. For each $j \in \{1, 2, \ldots, \ell\},$  let 
$\varphi_{j} \in C^{1} (N)$ be such that $\varphi_{j} \equiv 0$ in $B_{\r_{j}/2}(x_{j})$, 
$\varphi_{j} \equiv 1$ in $N \setminus B_{\r_{j}}(x_{j}),$ $0 \leq \varphi_{j} \leq 1$ and $|\nabla^{N} \, \varphi_{j}| \leq 4\r_{j}^{-1}.$ The function $\eta_{\e} = \Pi_{j=1}^{\ell} \varphi_{j}$  then has the desired properties). Now, given a vector field $X \in C^{1}_{c}(N)$, letting $\eta_{\e}$ be as above corresponding to the compact set $K = {\rm spt} \, X$, we have by stationarity of $W$ in $N \setminus {\rm sing} \, V_{1}$ that $0 = \int_{N \times G(n, k) \cap \{(x, S) \, : \, S \subset T_{x} \, N\}} {\rm div}_{S} \, \eta_{\e} X \, dW(x, S) =  \int_{N \times G(n, k) \cap \{(x, S) \, : \, S \subset T_{x} \, N\}} \eta_{\e} {\rm div}_{S} \,  X \, dW(x, S) +  \int_{N \times G(n, k) \cap \{(x, S) \, : \, S \subset T_{x} \, N\}} S(\nabla^{N} \, \eta_{\e}) \cdot X \, dW(x, S)$ where $S(\cdot)$ denotes the orthogonal projection onto $S$. Since $|\int_{N \times G(n, k) \cap \{(x, S) \, : \, S \subset T_{x} \, N\}} S(\nabla^{N} \, \eta_{\e}) \cdot X \, dW(x, S)| \leq C\e \sup \, |X|$ and $\eta_{\e}(x) \to 1$ as $\e \to 0$ for $x \in N \setminus {\rm sing} \, V_{1}$, we may let  $\e \to 0$ to conclude that $ \int_{N \times G(n, k) \cap \{(x, S) \, : \, S \subset T_{x} \, N\}} {\rm div}_{S} \,  X \, dW(x, S)=0$, i.e.\ that $W$ is stationary in $N$. 

So if ${\rm reg} \, V_{1}$ is not connected, then it has two components $M_{1}$, $M_{2}$ such that (by the preceding argument) $|M_{1}|$, $|M_{2}|$ are stationary in $N$  and $\emptyset \neq {\rm clos} \, M_{1} \cap {\rm clos} \, M_{2} \subset {\rm sing} \, V_{1}$ which by Theorem~\ref{disjoint} is impossible since ${\mathcal H}^{n-1} \, ({\rm sing} \, V_{1}) = 0.$ This proves that ${\rm reg} \, V_{1}$ is connected as claimed, and hence that ${\rm spt} \, \|V_{1}\| \subset {\rm spt} \, \|V_{2}\|.$

To complete the proof, choose, for each $x \in U  = {\rm reg} \, V_{1}$, a small number $\r_{x} > 0$ such that ${\rm reg} \, V_{1} \cap B_{\r_{x}}(x) = {\rm spt} \, \|V_{2}\| \cap B_{\r_{x}}(x).$ Note that if $x_{j} \in {\rm reg} \, V_{1}$ with $x_{j} \to x \in {\rm sing} \, V_{1}$, then $\r_{x_{j}} \to 0.$ Let $\Omega = \cup_{x \in {\rm reg} \, V_{1}} \, B_{\r_{x}/2}(x).$ Then ${\rm spt} \, \|V_{2}\| \cap \Omega= {\rm reg} \, V_{1},$ ${\rm sing} \, V_{1} \cap \Omega = \emptyset$ and ${\rm spt} \, \|V_{2}\| \cap \partial \, \Omega = {\rm sing} \, V_{1}.$   Furthermore, by Lemma~\ref{UsefulLemma} (applied with $\Omega$ in place of $N$) and connectedness of ${\rm reg} \, V_{1}$, we have that  for some positive constant $k$, $V_{2} \res \Omega = k|{\rm reg} \, V_{1}|$ as varifolds on $\Omega$.
Let $W_{2} = V_{2} \res (N \setminus {\rm clos} \, \Omega).$ Then, since $\|V_{2}\|({\rm sing} \, V_{1}) = 0$, 
it follows that $V_{2} = V_{2} \res (N \setminus {\rm sing} \, V_{1}) = W_{2} + k|{\rm reg} \, V_{1}|$ as varifolds on $N$, so that for any vector field 
$X \in C^{1}_{c}(N \setminus {\rm sing} \, V_{1})$, we have that 
\begin{eqnarray*}
0 &=& \int_{N \times G(n,k) \cap \{(x, S) \, : \, S \subset T_{x} \, N\}} {\rm div}_{S} \, X(x) \, dV_{2}(x, S)\nonumber\\ 
&=& \int_{N \times G(n, k) \cap \{(x, S) \, : \, S \subset T_{x} \, N\}} {\rm div}_{S} \, X(x) \, dW_{2}(x, S) + 
k\int_{{\rm reg} \, V_{1}} {\rm div}_{{\rm reg} \, V_{1}} \, X(x)\, d{\mathcal H}^{n}(x)\nonumber\\  
&=& \int_{N \times G(n, k) \cap \{(x, S) \, : \, S \subset T_{x} \, N\}} {\rm div}_{S} \, X(x) \, dW_{2}(x, S)
\end{eqnarray*}
where the last equality follows from the fact that ${\rm reg} \, V_{1}$ has zero mean-curvature. Thus $W_{2}$ is stationary in $N \setminus {\rm sing} \, V_{1}.$ Since ${\mathcal H}^{n-1}({\rm sing} \, V_{1}) = 0$ and $V_{2}$ is stationary in $N$, we deduce from this (by arguing as in the preceding paragraph) that $W_{2}$ is in fact stationary in $N$. (Alternatively, we may use the fact, established above, that $|{\rm reg} \, V_{1}|$ is stationary in $N$ to deduce slightly more directly that $W_{2}$ is stationary in $N$.) 

Since ${\rm spt} \, \|W_{2}\| \cap {\rm spt} \, \|V_{1}\| \subset {\rm sing} \, V_{1}$, an application of Theorem~\ref{disjoint} now tells us that ${\rm spt} \, \|W_{2}\| \cap {\rm spt} \, \|V_{1}\| = \emptyset.$ Since ${\rm spt} \, \|V_{2}\|$ is connected and $V_{2} = W_{2} + k|{\rm reg} \, V_{1}|$, we conclude from this that $W_{2} = 0$, and consequently that ${\rm spt} \, \|V_{1}\| = {\rm spt} \, \|V_{2}\|$. 
\end{proof}

\section{unique continuation: Proof of Theorem~\ref{uc}}\label{uc-proof}
We shall deduce Theorem~\ref{uc}  from Proposition~\ref{constancy} below, which is also a unique continuation result for stationary hypersurfaces and may be of independent interest. Proposition~\ref{constancy}  is an elementary consequence of well-known results from the theory of second order elliptic PDEs and stationary varifolds. We shall use  the following terminology in its statement and proof:

\noindent
{\bf Definition:} Let $N$ be a smooth manifold and $M \subset N$. We say that $M$ is \emph{strongly locally connected} if for every point $p \in {\rm clos} \, M$ and every $\r >0$, there exists 
$\s \in (0, \r)$ such that $M \cap B_{\sigma}(p)$ is connected. 

\begin{proposition}\label{constancy}
Let $V_{1}$, $V_{2}$ be stationary codimension 1 integral $n$-varifolds on a Riemannian manifold such that ${\rm reg} \, V_{1},$ ${\rm reg} \, V_{2}$ are connected and ${\rm reg} \, V_{2}$ is strongly locally connected.  If  ${\mathcal H}^{n-1+\g} \, ({\rm sing} \, V_{1}) = {\mathcal H}^{n-1+\g} \, ({\rm sing} \, V_{2}) = 0$ and ${\mathcal H}^{n-1+\g} \, ({\rm spt} \, \|V_{1}\| \cap {\rm spt} \, \|V_{2}\|) > 0$  for some $\g \in (0, 1]$, then  ${\rm spt} \, \|V_{1}\| = {\rm spt} \, \|V_{2}\|.$ In particular, ${\rm reg} \, V_{1}$ is strongly locally connected.   
\end{proposition}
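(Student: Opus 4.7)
The plan is to combine unique continuation for the minimal surface equation at a tangential intersection point with two applications of the constancy lemma (Lemma~\ref{UsefulLemma}); the strong local connectedness of ${\rm reg}\,V_2$ enters only once, in the propagation step. First, the hypotheses force $\mathcal{H}^{n-1+\gamma}({\rm reg}\,V_1 \cap {\rm reg}\,V_2) > 0$, and since the transverse intersection of the two regular parts is locally contained in a smooth $(n-1)$-submanifold, the transverse intersection set has $\mathcal{H}^{n-1+\gamma}$-measure zero for $\gamma > 0$. So the tangential set $T := \{x \in {\rm reg}\,V_1 \cap {\rm reg}\,V_2 : T_x({\rm reg}\,V_1) = T_x({\rm reg}\,V_2)\}$ has positive $\mathcal{H}^{n-1+\gamma}$-measure, and I pick $x_0 \in T$ of positive upper $(n-1+\gamma)$-density. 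In a chart about $x_0$ adapted to the common tangent plane, ${\rm reg}\,V_j$ is the graph of a smooth function $u_j$ solving the (smoothly elliptic) minimal surface equation of the ambient metric; the difference $w := u_1 - u_2$ satisfies a linear second-order elliptic PDE with smooth coefficients, and by the standard unique continuation result that zero sets of non-trivial solutions of such equations have locally finite $\mathcal{H}^{n-1}$-measure -- hence zero $\mathcal{H}^{n-1+\gamma}$-measure for $\gamma > 0$ -- the vanishing of $w$ on the projection of $T$ near $0$ forces $w \equiv 0$ in a neighborhood, giving ${\rm spt}\,\|V_1\| = {\rm spt}\,\|V_2\|$ in an open ball $B_0$ about $x_0$.

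Next I propagate the local coincidence through ${\rm reg}\,V_1$. Set
\[ U := \{y \in {\rm reg}\,V_1 : {\rm spt}\,\|V_1\| \cap B_\rho(y) = {\rm spt}\,\|V_2\| \cap B_\rho(y) \text{ for some } \rho > 0\}. \]
$U$ is open in ${\rm reg}\,V_1$ and nonempty; by connectedness of ${\rm reg}\,V_1$ it suffices to show $U$ is closed in ${\rm reg}\,V_1$. Take $y_k \in U$, $y_k \to y \in {\rm reg}\,V_1$; local support coincidence and regularity at $y_k$ force $y_k \in {\rm reg}\,V_2$, so $y \in {\rm spt}\,\|V_2\|$. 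If $y \in {\rm reg}\,V_2$, the first step applies at $y$ (tangent planes agree by continuity from the $y_k$, and $u_1 = u_2$ on an open set near the projection of $y_k$, so even weak unique continuation suffices), giving $y \in U$. \emph{The essential case $y \in {\rm sing}\,V_2$ is the main obstacle}, and this is precisely where strong local connectedness of ${\rm reg}\,V_2$ is used: choose $\sigma > 0$ small enough that $B_\sigma(y) \cap {\rm sing}\,V_1 = \emptyset$ (possible since $y \in {\rm reg}\,V_1$) and so that $H := {\rm reg}\,V_2 \cap B_\sigma(y)$ is connected. Because there are no singular $V_1$-points in $B_\sigma(y)$, the first step propagates along the connected smooth $n$-manifold $H$ starting from $y_k$: the set of points of $H$ admitting local support coincidence is open, nonempty, and closed in $H$ (closedness uses that limit points lie in ${\rm reg}\,V_1 \cap B_\sigma(y)$), hence equals $H$. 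So $H \subset M := {\rm reg}\,V_1 \cap B_\sigma(y)$; taking closures in $B_\sigma(y)$ (where $M = {\rm spt}\,\|V_1\| \cap B_\sigma(y)$ is closed) yields ${\rm spt}\,\|V_2\| \cap B_\sigma(y) \subset M$. Lemma~\ref{UsefulLemma} applied to the stationary $V_2 \res B_\sigma(y)$, supported in the smooth connected $n$-manifold $M$, gives $V_2 \res B_\sigma(y) = k|M|$, so ${\rm spt}\,\|V_2\| \cap B_\sigma(y) = M$ is smooth at $y$ -- contradicting $y \in {\rm sing}\,V_2$. Hence $U = {\rm reg}\,V_1$, and in particular ${\rm reg}\,V_1 \subset {\rm reg}\,V_2$.

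Finally, for the reverse inclusion, I will show ${\rm reg}\,V_1$ is closed in ${\rm reg}\,V_2$, so connectedness of ${\rm reg}\,V_2$ forces ${\rm reg}\,V_1 = {\rm reg}\,V_2$ and ${\rm spt}\,\|V_1\| = {\rm spt}\,\|V_2\|$. If $z_k \in {\rm reg}\,V_1$ tends to $z \in {\rm reg}\,V_2 \cap {\rm sing}\,V_1$, choose $\sigma > 0$ small so that $L := {\rm reg}\,V_2 \cap B_\sigma(z) = {\rm spt}\,\|V_2\| \cap B_\sigma(z)$ is a connected smooth $n$-manifold, closed in $B_\sigma(z)$. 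From ${\rm reg}\,V_1 \subset {\rm reg}\,V_2$ we get ${\rm reg}\,V_1 \cap B_\sigma(z) \subset L$, and taking closures in $B_\sigma(z)$ yields ${\rm spt}\,\|V_1\| \cap B_\sigma(z) \subset L$. A second application of Lemma~\ref{UsefulLemma} to $V_1 \res B_\sigma(z)$ gives $V_1 \res B_\sigma(z) = k_1 |L|$, so $z \in L$ is a regular point of $V_1$ -- contradiction. The ``in particular'' claim then follows since ${\rm reg}\,V_1 = {\rm reg}\,V_2$ inherits strong local connectedness; no analogous difficulty to the previous step arises here, since any $z \in {\rm reg}\,V_2$ already admits a neighborhood free of singular $V_2$-points without needing the strong local connectedness hypothesis.
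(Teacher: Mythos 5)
Your argument is correct and follows the paper's overall architecture: locate a tangential intersection point of positive upper $(n-1+\gamma)$-density, apply linear unique continuation to the graph difference to get local coincidence, then run an open-and-closed propagation in ${\rm reg}\,V_1$ where strong local connectedness of ${\rm reg}\,V_2$ and Lemma~\ref{UsefulLemma} handle the troublesome case where the limit point lies in ${\rm sing}\,V_2$. (Minor technical points you gloss over but that hold: the ball $B_\sigma(y)$ must also be small enough that $M = {\rm reg}\,V_1\cap B_\sigma(y)$ is itself connected, which is fine since $y\in{\rm reg}\,V_1$; and you invoke the stronger fact that nodal sets have locally finite $\mathcal{H}^{n-1}$-measure where the paper only needs its more elementary Lemma~\ref{uc-pde}.) Where you genuinely depart from the paper is the endgame. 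Having established ${\rm reg}\,V_1\subset{\rm reg}\,V_2$, the paper shows ${\rm sing}\,V_1\subset{\rm sing}\,V_2$ via Lemma~\ref{UsefulLemma}, deduces from this that ${\rm reg}\,V_1$ is strongly locally connected, and then \emph{reruns the entire preceding argument with the roles of $V_1$ and $V_2$ reversed}. You instead observe that the very same Lemma~\ref{UsefulLemma} computation already shows ${\rm reg}\,V_1$ is closed in ${\rm reg}\,V_2$ (as well as open, by invariance of domain), and then the hypothesis that ${\rm reg}\,V_2$ is connected finishes immediately with ${\rm reg}\,V_1={\rm reg}\,V_2$; the ``in particular'' clause then falls out for free. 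This shortcut is sound and avoids the symmetrization step entirely, making the final stage of the proof more direct than the paper's.
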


\noindent
{\bf Remarks:} {\bf (1)} In Proposition~\ref{constancy}, the hypothesis that ${\rm reg} \, V_{2}$ is connected can be replaced by the hypothesis that ${\rm spt} \, \|V_{2}\|$ is connected since whenever a strongly locally connected set $M$ has its closure ${\rm clos} \, M$ connected, then $M$  itself must be connected. 

\noindent
{\bf (2)} In case $n \geq 2$, it is a well-known open question  whether stationarity of $V_{j}$ must imply ${\mathcal H}^{n-1+\g} \, ({\rm sing} \, V_{j}) = 0$ for some $\g \in (0, 1].$ Thus it is an interesting question whether Proposition~\ref{constancy}  holds without the assumption ${\mathcal H}^{n-1+\g} \, ({\rm sing} \, V_{j}) = 0$ for $j=1, 2$, even in the case $\g = 1.$

We shall give the proof of Proposition~\ref{constancy} at the end of this section. We point out the following consequence of it first. 

\begin{corollary}\label{uc-smooth}
Let $M_{1}$, $M_{2}$ be embedded smooth $n$-dimensional hypersurfaces of $N$ with locally finite mass (and possibly with 
${\rm clos} \, M_{j} \setminus M_{j} \neq \emptyset$ for $j=1$ or $2$).  If $M_{1}$, $M_{2}$ (with multiplicity 1) are stationary in $N$, $M_{1}$ is connected, $M_{2}$ is connected and strongly locally connected, and if ${\mathcal H}^{n} \, ({\rm clos} \, M_{1} \cap {\rm clos} \, M_{2}) >0$, then ${\rm clos} \, M_{1} = {\rm clos} \, M_{2}.$   
\end{corollary}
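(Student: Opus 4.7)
The plan is to apply Proposition~\ref{constancy} with $\gamma = 1$ to the multiplicity-$1$ varifolds $V_j = |M_j|$ for $j=1,2$. Since each $M_j$ is smoothly embedded with locally finite $n$-dimensional mass, each $V_j$ is a codimension $1$ integral $n$-varifold on $N$, and stationarity of $V_j$ is given. I will use that ${\rm spt}\,\|V_j\| = {\rm clos}\,M_j$, $M_j \subset {\rm reg}\,V_j$, and ${\rm sing}\,V_j \subset {\rm clos}\,M_j \setminus M_j$, so ${\rm sing}\,V_j$ is closed in $N$ and $\|V_j\|({\rm sing}\,V_j)=0$.

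First I would verify the two connectedness hypotheses. Since $M_j \subset {\rm reg}\,V_j$ and both are smooth $n$-dimensional submanifolds, $M_j$ is open in ${\rm reg}\,V_j$; since $M_j$ is dense in ${\rm spt}\,\|V_j\| \supset {\rm reg}\,V_j$, it is also dense in ${\rm reg}\,V_j$. Hence ${\rm reg}\,V_j$, being the closure of the connected set $M_j$ within itself, is connected. For the strong local connectedness of ${\rm reg}\,V_2$: given $p \in {\rm clos}({\rm reg}\,V_2) = {\rm clos}\,M_2$ and $\rho>0$, the hypothesis on $M_2$ gives $\sigma \in (0,\rho)$ with $M_2 \cap B_\sigma(p)$ connected, and the same open-and-dense argument applied inside $B_\sigma(p)$ shows that ${\rm reg}\,V_2 \cap B_\sigma(p)$ is the closure of $M_2 \cap B_\sigma(p)$ in itself, hence connected.

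The key technical step, which I expect to be the least routine point of the argument, is to establish ${\mathcal H}^n({\rm sing}\,V_j) = 0$, so that Proposition~\ref{constancy} applies with $\gamma = 1$. The monotonicity formula for the stationary integral varifold $V_j$ gives the lower density bound $\theta^n(\|V_j\|, x) \geq 1$ at every $x \in {\rm spt}\,\|V_j\|$, and in particular at every $x \in {\rm sing}\,V_j$. Combined with $\|V_j\|({\rm sing}\,V_j) = 0$ and the outer regularity of the Radon measure $\|V_j\|$ (available because ${\rm sing}\,V_j$ is closed in $N$), a standard $5r$-Vitali covering argument then yields ${\mathcal H}^n({\rm sing}\,V_j) \leq C(n)\,\|V_j\|(U)$ for every open $U \supset {\rm sing}\,V_j$, and hence ${\mathcal H}^n({\rm sing}\,V_j) = 0$.

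Finally, the hypothesis ${\mathcal H}^n({\rm clos}\,M_1 \cap {\rm clos}\,M_2) > 0$ is exactly ${\mathcal H}^n({\rm spt}\,\|V_1\| \cap {\rm spt}\,\|V_2\|) > 0$, so all hypotheses of Proposition~\ref{constancy} are met with $\gamma = 1$; its conclusion ${\rm spt}\,\|V_1\| = {\rm spt}\,\|V_2\|$ is precisely ${\rm clos}\,M_1 = {\rm clos}\,M_2$, which is the desired conclusion.
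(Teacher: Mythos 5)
Your proposal is correct and takes essentially the same route as the paper: pass to $V_j = |M_j|$ and apply Proposition~\ref{constancy} with $\gamma=1$, the crux being $\mathcal{H}^{n}({\rm sing}\,V_j)=0$. The only (cosmetic) difference is in that step: the paper deduces $\Theta(\|V_j\|,\cdot)\geq 1$ on ${\rm clos}\,M_j$ by upper semi-continuity of density and then invokes the general upper-density fact [Sim83, Theorem 3.5] to get $\mathcal{H}^{n}({\rm clos}\,M_j\setminus M_j)=0$, whereas you combine the same density lower bound (from monotonicity and integrality) with a Vitali covering and outer regularity of $\|V_j\|$ to get $\mathcal{H}^{n}({\rm sing}\,V_j)=0$ directly; these are equivalent in content, and your explicit verification of the connectedness and strong local connectedness hypotheses, left implicit in the paper's proof, is also correct.
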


\begin{proof} For $j=1, 2$, let $V_{j} = |M_{j}|$.  By hypothesis $V_{j}$ is stationary in $N$. By smoothness of $M_{j}$, we have that $\Theta \, (\|V_{j}\|, x) = 1$ for every $x \in M_{j}$, and hence by upper semi-continuity of density, $\Theta \, (\|V_{j}\|, x) \geq 1$ for every $x \in {\rm clos} \, M_{j}.$ Since by general measure theory the upper density of $M_{j}$ at $x$ with respect to ${\mathcal H}^{n}_{\infty}$ is zero for ${\mathcal H}^{n}_{\infty}$-a.e. $x \in N \setminus M_{j}$ (\cite{S1}, Theorem 3.5), it follows that ${\mathcal H}^{n} \, ({\rm clos} \, M_{j} \setminus M_{j}) = 0$ for $j=1, 2$. Since 
${\rm spt} \, \|V_{j}\| = {\rm clos} \, M_{j}$ and ${\rm sing} \, V_{j} \subset {\rm clos} \, M_{j} \setminus M_{j}$, the corollary follows from Proposition~\ref{constancy} with $\g = 1.$ 
\end{proof}

The following lemma, which we shall need for the proof of Proposition~\ref{constancy}, is well known, and is an easy corollary of the strong unique continuation property for solutions to elliptic equations with (sufficiently) regular coefficients. 

\begin{lemma}\label{uc-pde}
Let $\Omega \subset {\mathbb R}^{n}$ be a connected, open set, and $v$ be a smooth real-valued function
solving on $\Omega$ a homogeneous uniformly elliptic linear second order partial differential equation with smooth coefficients.  If 
${\mathcal H}^{n-1+\g} \, (\{x \in \Omega \, : \, v(x) = 0\}) > 0$ for some $\g \in (0, 1]$, then $v \equiv 0$ in $\Omega.$   
\end{lemma}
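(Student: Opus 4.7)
The plan is to prove the contrapositive: assume $v \not\equiv 0$ and deduce that ${\mathcal H}^{n-1+\g}(\{v=0\}) = 0$ for every $\g \in (0,1]$. The proof rests on two classical ingredients from the theory of second order linear elliptic PDEs with smooth coefficients: Aronszajn's strong unique continuation theorem, and the Hausdorff-dimension bound for the nodal set of a solution.

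First I would invoke Aronszajn's strong unique continuation theorem: since $v$ solves a uniformly elliptic linear second order equation with smooth coefficients on the connected open set $\Om$ and is not identically zero, $v$ cannot vanish to infinite order at any point $x_{0} \in \Om$. Equivalently, at each $x_{0} \in \Om$ there is a finite integer $N = N(x_{0}) \geq 0$ such that the Taylor polynomial $P_{N}(\cdot - x_{0})$ of $v$ at $x_{0}$ of degree $N$ is a non-zero homogeneous polynomial (in $x - x_{0}$) of degree $N$. Since $P_{N}$ is a non-trivial polynomial on ${\mathbb R}^{n}$, its zero set has Hausdorff dimension at most $n-1$, and therefore has density zero at the origin in the Hausdorff sense ${\mathcal H}^{n-1+\g}$ for every $\g > 0$.

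The second (and main) step is to transfer this local Taylor-expansion fact to the zero set of $v$ itself. Using the elliptic equation satisfied by $v$ and standard regularity estimates (Schauder estimates), in a sufficiently small ball $B_{\r}(x_{0})$ the function $v$ is controlled by $P_{N}(\cdot - x_{0})$ plus an error of order $|x-x_{0}|^{N+1}$; after rescaling by $\r$, the rescaled zero sets of $v$ converge in the Hausdorff distance to the zero set of $P_{N}$. Since the latter has Hausdorff dimension $\leq n-1$, one obtains, via standard Vitali-type covering arguments, the quantitative conclusion that
\[
\limsup_{\r \downarrow 0} \r^{-(n-1+\g)} \, {\mathcal H}^{n-1+\g}_{\infty}(\{v = 0\} \cap B_{\r}(x_{0})) = 0
\]
for every $x_{0} \in \Om$ and every $\g \in (0,1]$. (This is the content of the Hardt--Simon estimate on nodal sets of smooth solutions to elliptic equations, which in fact gives local finiteness of the $(n-1)$-dimensional Hausdorff measure of $\{v=0\}$; for our purposes only the vanishing of $(n-1+\g)$-density is needed.)

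Having established that every point of $\{v=0\}$ has vanishing upper $(n-1+\g)$-Hausdorff density, the conclusion ${\mathcal H}^{n-1+\g}(\{v=0\}) = 0$ follows from a standard density-measure comparison (Federer's theorem on densities, \cite{S1}, Theorem 3.2), contradicting the hypothesis and completing the proof. The principal obstacle is the quantitative ``Taylor-expansion vs.\ zero set'' step, but this is precisely where the smoothness of the coefficients (hence real-analyticity-like control of $v$ via Aronszajn's Carleman estimates) is essential and where the argument is classical.
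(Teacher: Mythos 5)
Your argument is correct in outline, but it takes a substantially heavier route than the paper's and contains one non-rigorous link. The paper works \emph{forward}: it observes (via the implicit function theorem) that, for each multi-index $\a$, the set $\{x : D^{\a}v(x)=0,\ D(D^{\a}v)(x)\neq 0\}$ is an embedded $(n-1)$-manifold and so is ${\mathcal H}^{n-1+\g}$-null; hence $\{x : v(x)=0,\ D^{\a}v(x)\neq 0 \ \text{for some}\ \a\}$, which is contained in the countable union of those sets, is ${\mathcal H}^{n-1+\g}$-null, and the hypothesis ${\mathcal H}^{n-1+\g}(\{v=0\})>0$ forces the existence of a point where $v$ vanishes to infinite order; strong unique continuation then finishes. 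This needs nothing quantitative about nodal sets. You instead argue the \emph{contrapositive}: Aronszajn's theorem rules out infinite-order vanishing, and you then need a genuine nodal-set measure estimate (Hardt--Simon) to conclude ${\mathcal H}^{n-1+\g}(\{v=0\})=0$. That is a correct route and the Hardt--Simon reference does cover it, but the step where you pass from Hausdorff convergence of the rescaled nodal sets to the zero set of the leading Taylor polynomial $P_{N}$ to an $(n-1+\g)$-density bound is not valid as written: Hausdorff convergence of closed sets carries no information about Hausdorff measure or density, and the Hardt--Simon estimate requires the additional machinery of doubling/Carleman-type monotonicity, not merely blowup convergence. Also, once you have local finiteness of ${\mathcal H}^{n-1}(\{v=0\})$ from Hardt--Simon, you can conclude ${\mathcal H}^{n-1+\g}(\{v=0\})=0$ immediately (finite $s$-measure implies zero $t$-measure for $t>s$), without the detour through upper densities and Federer's comparison theorem. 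In short: the conclusion is right and the citation is right, but the argument invokes a much deeper theorem than necessary, and the purported elementary derivation of the density bound is gapped; the paper's proof is both shorter and self-contained modulo SUCP alone.
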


\begin{proof} It follows from the implicit function theorem that the set 
$Z = \{x \in \Omega \, : \, v(x) = 0, \; Dv(x) \neq 0\}$ is an $(n-1)$-dimensional embedded submanifold of $\Omega,$ and hence in particular that ${\mathcal H}^{n-1+\g} \, (Z) = 0$. By applying this fact with 
$D^{\a} v$ in place of $v$ for each multi-index  $\a$, we deduce that the set 
$\{x \in \Omega \, : \, v(x) = 0, \; D^{\a}v(x) \neq 0 \;\;\mbox{for some multi-index $\a$}\}$ has 
${\mathcal H}^{n-1+\g}$ measure zero. Thus there is a point $x_{0} \in \Omega$ at which 
$v$ and its derivatives of all orders vanish. The lemma now follows from the well known strong unique continuation property for $v$. 
\end{proof}

\noindent
{\bf Remark.} Although the lemma as stated above suffices for our purposes here, its conclusion continues to hold under much weaker regularity hypotheses; specifically, the lemma holds under the (sharp) hypotheses that  the top order coefficients of the (divergence form) equation are locally Lipschitz, lower order coefficients are bounded and $v \in W^{1, 2}_{\rm loc} \, (\Omega)$ is a weak solution (which then, by elliptic regularity theory, automatically belongs to 
$W^{2, 2}_{\rm loc} \, (\Omega) \cap C^{1, \a} \, (\Omega)$ for any $\a \in (0, 1)$). A proof of this general version of the lemma  can be based on the monotonicity formula for the Almgren frequency function associated with $v$ (established by Garofalo and Lin in \cite{GL} and \cite{GL1}) to show: 
(a) that every blow-up of $v$ at every point $z \in Z_{v} \equiv \{x \in \Omega \, : \, v(x) = 0 \;\;{\rm and} \;\; v \not\equiv 0 \;\;\mbox{in any ball centered at} \;\; x\}$ is non-zero, and 
(b) by a dimension reducing argument and (a), that the Hausdorff dimension of $Z_{v}$ is at most $(n-1)$.  This shows, under the hypothesis ${\mathcal H}^{n-1+\g} \, (\{x \in \Omega \, : \, v(x) = 0\}) > 0$ as in the lemma, that there must exist a point near which $v$ is identically zero. Thus the set 
$\widetilde{Z}_{v} =\{x \in \Omega \, : \, \left.v\right|_{B_{\r}(x)} = 0 \;\; \mbox{for some} \;\; \r>0\}$ is non-empty and open in $\Omega$, so if $\widetilde{Z}_{v} \neq \Omega$, then we may pick a point 
$y \in \widetilde{Z}_{v}$ such that $R = {\rm dist}\, (y, \partial \, \widetilde{Z}_{v}) < {\rm dist} \, (y, \partial \, \Omega),$ choose $y_{1} \in \partial \, \widetilde{Z}_{v} \cap \Omega$ with $|y - y_{1}| = R$, and consider any blow-up $\varphi$ of $v$ at $y_{1}.$ Such $\varphi$ will have the property that $\varphi \not\equiv 0$ but $\{\varphi = 0\}$ contains a half-space, contradicting Lemma~\ref{uc-pde}. So we must have that $\widetilde{Z}_{v} = \Omega$.

\begin{proof}[Proof of Proposition~\ref{constancy}] 
Let $V_{1},$ $V_{2}$ and $\g \in (0, 1]$ be as in the statement of the proposition. 
Since by hypothesis ${\mathcal H}^{n-1+\g} \, ({\rm sing} \, V_{j}) = 0$ for $j=1, 2$ and ${\mathcal H}^{n-1+\g} \, ({\rm spt} \, \|V_{1}\| \cap {\rm spt} \, \|V_{2}\|) >0$, it follows that 
${\mathcal H}^{n-1+\g} \, ({\rm reg} \, V_{1} \cap {\rm reg} \, V_{2}) > 0$. On the other hand, the set of points $y \in  {\rm reg} \, V_{1} \cap {\rm reg} \, V_{2}$ where the tangent planes to ${\rm spt} \, \|V_{1}\|$ and ${\rm spt} \, \|V_{2}\|$ are distinct is an $(n-1)$-dimensional embedded submanifold, so in particular that set has ${\mathcal H}^{n-1+\g}$ measure zero. Hence 
the set ${\mathcal T}$ of points  $y  \in {\rm reg} \, V_{1} \cap {\rm reg} \, V_{2}$ at which 
${\rm spt} \, \|V_{1}\|$, ${\rm spt} \, \|V_{2}\|$ have a common tangent plane has positive ${\mathcal H}^{n-1+\g}$ measure, and hence (since ${\mathcal H}^{k}(A) > 0 \iff {\mathcal H}^{k}_{\infty}(A) >0$) it also has positive ${\mathcal H}^{n-1+\g}_{\infty}$ measure. 
Now let $y_{0} \in {\mathcal T}$ be a point where the upper density of ${\mathcal T}$ with respect to ${\mathcal H}^{n-1+\g}_{\infty}$  is positive. By general measure theory (e.g.\ \cite{S1}, Theorem~3.6 (2)), ${\mathcal H}^{n-1+\g}_{\infty}$-a.e.\ point in ${\mathcal T}$ is such a point.  Let $T$ be the common tangent plane to ${\rm spt} \, \|V_{1}\|$ and ${\rm spt} \, \|V_{2}\|$ at $y_{0},$ and identify $T_{y_{0}} \, N$ with ${\mathbb R}^{n+1}$ such that $T$ is identified with 
${\mathbb R}^{n} \times \{0\}.$ Let $g_{0}$ denote the exponential map at $y_{0}$, and note that for sufficiently small $\r>0$, $G_{0}^{(j)} \equiv g_{0}^{-1} \, {\rm spt} \, \|V_{j}\| \cap B_{\r}(y_{0})$ is the graph of a smooth function $u_{j}$ on $\Omega_{0}^{(j)} \equiv g_{0}^{-1} \, B_{\r}(y_{0}) \cap \pi_{0} \, G_{0}^{(j)}$ where $\pi_{0} \, : \, {\mathbb R}^{n+1} \to {\mathbb R}^{n} \times \{0\}$ is the orthogonal projection. Furthermore, on the common domain $\Omega_{0} = \Omega_{0}^{(1)} \cap \Omega_{0}^{(2)}$, each $u_{j},$ $j=1, 2$, solves  the Euler--Lagrange equation of an elliptic functional of the form ${\mathcal F}(u) = \int_{\Omega_{0}} F(Du)$, where the integrand $F$ is smooth.  It is 
standard then that $v \equiv u_{1} - u_{2}$ solves on $\Omega_{0}$ a homogeneous uniformly elliptic equation with smooth coefficients. Since $y_{0}$ is a point of positive upper density for ${\mathcal T}$ with respect to ${\mathcal H}^{n-1+\g}_{\infty}$, it follows from the definition of upper density that 
provided $\r>0$ is sufficiently small, ${\mathcal H}^{n-1+\g} \, (\{x \in \Omega_{0} \, : v(x) = 0\}) > 0$. Hence by Lemma~\ref{uc-pde}  $v \equiv 0$ on $\Omega_{0}$, which means that 
${\rm spt} \, \|V_{1}\| \cap B_{\r}(y_{0}) = {\rm spt} \, \|V_{2}\| \cap B_{\r}(y_{0})$ for suitably small $\r > 0.$ 

Let $U = \{x \in {\rm reg} \, V_{1} \, : \, \mbox{there exists $\r > 0$ such that 
${\rm spt} \, \|V_{1}\| \cap B_{\r}(x) = {\rm spt} \, \|V_{2}\| \cap B_{\r}(x)$}\}.$
By definition, $U$ is open relative to ${\rm reg} \, V_{1}$, and $U \neq \emptyset$ since $y_{0} \in U.$ $U$ is also closed relative to ${\rm reg} \, V_{1}$. To see this, let $y \in {\rm reg} \, V_{1}$ be such that 
there is a sequence of points $x_{1}, x_{2}, \ldots \in U$ with $x_{j} \to y.$ Then $y \in {\rm spt} \, \|V_{2}\|.$ Choose small $\r > 0$ such that 
${\rm spt} \, \|V_{1}\| \cap B_{\r}(y) \subset {\rm reg} \, V_{1}.$ Since ${\rm reg} \, V_{2}$ is strongly locally connected, 
there exists $\s \in (0, \r)$ such that ${\rm reg} \, V_{2} \cap B_{\s}(y)$ is connected. Let $V_{j}^{\s}  = V_{j} \res B_{\s}(y)$ for $j=1, 2$, and 
let $$U_{\s} = \{x \in {\rm reg} \, V_{2}^{\s} \, : \, \mbox{there exists $\r > 0$ such that 
${\rm spt} \, \|V_{2}^{\s}\| \cap B_{\r}(x) = {\rm spt} \, \|V_{1}^{\s}\| \cap B_{\r}(x)$}\}.$$ 
Then $U_{\s}$ is open relative to 
${\rm reg}\, V_{2}^{\s}$ and is non-empty since $x_{j} \in U_{\s}$ for all sufficiently large $j.$  Since ${\rm sing} \, V_{1}^{\s} = \emptyset$, it follows that at any limit point $z$ of $U_{\s}$ in ${\rm reg} \, V_{2}^{\s},$ both ${\rm spt} \, \|V_{1}^{\s}\|$ and  
${\rm spt} \, \|V_{2}^{\s}\|$ have the same tangent plane, so writing ${\rm spt} \, \|V_{1}^{\s}\|,$  ${\rm spt} \, \|V_{2}^{\s}\|$ near $z$ as graphs of functions $u_{1}^{\s}$, $u_{2}^{\s}$ defined on a domain in this common tangent plane, and noting that $v_{\s} \equiv u_{1}^{\s} - u_{2}^{\s}$ vanishes on a non-empty open set, we conclude with the help of Lemma~\ref{uc-pde} as in the paragraph above that $z \in U_{\s}.$ By connectedness of ${\rm reg} \, V_{2}^{\s}$, we then have that 
${\rm reg} \, V_{2}^{\s} \subset {\rm spt}\, \|V_{1}^{\s}\|$ and hence that ${\rm spt} \, \|V_{2}^{\s}\| \subset {\rm spt} \, \|V_{1}^{\s}\|,$ which implies, by Lemma~\ref{UsefulLemma}, that 
${\rm spt} \, \|V_{2}^{\s}\| = {\rm spt} \, \|V_{1}^{\s}\|.$ Thus $y \in U$ so $U$ is closed relative to ${\rm reg} \, V_{1}$ as claimed.

Since ${\rm reg} \, V_{1}$ is connected, we conclude that ${\rm reg} \, V_{1} \subset {\rm reg} \, V_{2}.$  
We claim that this implies that ${\rm reg} \, V_{1}$ is strongly locally connected. To see this, note first that ${\rm spt} \, \|V_{1}\| \subset {\rm spt} \, \|V_{2}\|,$ so
if $z \in {\rm spt} \, \|V_{1}\| \cap {\rm reg} \, V_{2},$ then we have by Lemma~\ref{UsefulLemma} that ${\rm spt} \, \|V_{1}\| \cap B_{\s}(z) = {\rm spt} \, \|V_{2}\| \cap B_{\s}(z)$ for sufficiently small $\s>0$ and consequently that $z \in {\rm reg} \, V_{1}$. Thus  ${\rm spt} \, \|V_{1}\| \cap {\rm reg} \, V_{2} \subset {\rm reg} \, V_{1}$, or, equivalently, ${\rm sing} \, V_{1} \subset {\rm sing} \, V_{2}$. Now 
let $z \in {\rm sing} \, V_{1}$ and $\r>0$. Then $z \in {\rm sing} \, V_{2}$ so by strong local connectedness of ${\rm reg} \, V_{2}$, there exists $\s  \in (0, \r)$ such that ${\rm reg} \, V_{2} \cap B_{\s}(z)$ is connected.  But since ${\rm sing} \, V_{1} \subset {\rm sing} \, V_{2}$ and ${\rm reg} \, V_{1} \subset {\rm reg} \, V_{2}$, it follows that ${\rm reg} \, V_{1} \cap B_{\s}(z)$ is both an open and a closed subset of ${\rm reg} \, V_{2} \cap B_{\s}(z).$ Thus ${\rm reg} \, V_{1} \cap B_{\s}(z) = {\rm reg} \, V_{2} \cap B_{\s}(z)$, and in particular ${\rm reg} \, V_{1} \cap B_{\s}(z)$ is connected. This means that ${\rm reg} \, V_{1}$ is strongly locally connected as claimed.

Since ${\rm reg}\, V_{2}$ is connected by hypothesis, we can now repeat the argument leading to the conclusion  ${\rm reg} \, V_{1} \subset {\rm reg} \, V_{2},$ with the roles of $V_{1}$, $V_{2}$ reversed, to deduce that ${\rm reg} \, V_{1} = {\rm reg}\, V_{2},$ and consequently that ${\rm spt} \, \|V_{1}\| = {\rm spt} \, \|V_{2}\|.$ 
\end{proof}

\begin{proof}[Proof of Theorem~\ref{uc}]
Since by hypothesis ${\rm spt} \, \|V_{j}\|$ is connected and ${\mathcal H}^{n-1} \, ({\rm sing} \, V_{j}) = 0$ for $j=1, 2$, it follows from Theorem~\ref{disjoint} that ${\rm reg} \, V_{j}$ is connected for $j=1, 2$. (The argument here is exactly that in the third paragraph of the proof of Theorem~\ref{maxm}.) Also, for each $p \in {\rm spt} \, \|V_{2}\|,$ we may choose $\s > 0$ sufficiently small such that ${\rm spt} \, \|V_{2}\| \cap B_{\s}(p)$ is connected, and use Theorem~\ref{disjoint} again with $V_{2} \res B_{\s}(p)$ in place of $V$ and $B_{\s}(p)$ in place of $N$ to see that ${\rm reg} \, V_{2} \cap B_{\s}(p)$ is connected. Thus ${\rm reg} \, V_{2}$ is strongly locally connected. The theorem now follows from Proposition~\ref{constancy}. 
\end{proof}

\end{document}